\documentclass[a4paper, 11pt]{amsart}

\usepackage{amsthm, amssymb, wasysym}
\usepackage[leqno]{amsmath}
\usepackage{caption}
\usepackage{subcaption}
\usepackage{hyperref}
\usepackage{booktabs}
\usepackage{relsize}

\usepackage{tikz}
\usepackage{tikz-3dplot}


\usepackage{tikz-cd}
\usepackage{tikz}
\usetikzlibrary{decorations, calc,
	3d, matrix,decorations.pathreplacing,calc,decorations.pathmorphing}
\usetikzlibrary{patterns}

\setlength{\emergencystretch}{2pt}

\numberwithin{equation}{section}
\allowdisplaybreaks[1]

\captionsetup[subfigure]{labelformat=simple}

\makeatletter
\newcommand{\leqnomode}{\tagsleft@true\let\veqno\@@leqno}
\newcommand{\reqnomode}{\tagsleft@false\let\veqno\@@eqno}
\makeatother


\newcommand{\defi}[1]{{\textbf{#1}}}
\newcommand{\C}{{\mathbb{C}}}
\newcommand{\aaa}[3]{{{a}^{{(#1)}}_{{#2},{#3}}}}

\newcommand{\GW}{\mathrm{GW}}
\newcommand{\Ham}{\mathrm{Ham}}

\definecolor{cof}{RGB}{219,144,71}
\definecolor{pur}{RGB}{186,146,162}
\definecolor{greeo}{RGB}{91,173,69}
\definecolor{greet}{RGB}{52,111,72}


\newtheorem{theorem}{Theorem}[section]
\newtheorem{lemma}[theorem]{Lemma}
\newtheorem{proposition}[theorem]{Proposition}

\newtheorem{Question}[theorem]{Question}

\theoremstyle{definition}
\newtheorem{example}[theorem]{Example}
\newtheorem{definition}[theorem]{Definition}
\newtheorem{remark}[theorem]{Remark}

%
%

\setlength{\marginparwidth}{0.7in}

\begin{document}

\author{Taekgyu Hwang}
\address{Department of Mathematical Sciences
	\\ KAIST \\ 291 Daehak-ro Yuseong-gu \\ Daejeon \\ Republic of Korea 34141}
\email{hwangtaekkyu@gmail.com}
	
\author{Eunjeong Lee}
\address{Department of Mathematical Sciences
	\\ KAIST \\ 291 Daehak-ro Yuseong-gu \\ Daejeon \\  Republic of Korea 34141}
\email{EunjeongLee@kaist.ac.kr}

\author{Dong Youp Suh}
\address{Department of Mathematical Sciences
	\\ KAIST \\ 291 Daehak-ro Yuseong-gu \\ Daejeon \\  Republic of Korea 34141}
\email{dysuh@kaist.ac.kr}

\thanks{This research was supported by Basic Science Research Program through the National Research Foundation of Korea(NRF) funded by the Ministry of Science and ICT(No. 2016R1A2B4010823).}

\keywords{Bott manifold, Fano, Gromov width, Gromov--Witten invariant, Hamiltonian action, moment map, Seidel representation, toric manifold} \subjclass[2010]{Primary: 53D05;
	Secondary: 14M25, 53D45}


\title{The Gromov width of generalized Bott manifolds}


\begin{abstract}
By Delzant's theorem, closed symplectic toric manifolds are classified by the images of moment maps. In the case of a generalized Bott manifold, this image is a polytope~$P$ combinatorially equivalent to the product of simplices. We compute the Gromov width of generalized Bott manifolds in terms of the defining inequalities of~$P$. 
\end{abstract}


\date{\today}
\maketitle
\setcounter{tocdepth}{2} \tableofcontents

\section{Introduction}

The Gromov nonsqueezing theorem~\cite{Gr85} asserts that the ball~$B^{2n}(r)$ of radius~$r$ in the Euclidean space~$\mathbb{R}^{2n}$ can be symplectically embedded into the product $B^2(R) \times \mathbb{R}^{2n-2}$ if and only if $r \leq R$. This motivates the following definition of the \defi{Gromov width} of a symplectic manifold $(M, \omega)$:
\begin{equation}
\begin{split}
&w_G(M^{2n}, \omega) \\
& \quad := \sup \{ \pi r^2 \mid B^{2n}(r) \text{ can be symplectically embedded into $M^{2n}$}\}.
\end{split}
\end{equation}
It is difficult to compute the Gromov width in general. To compute this invariant, we should find an upper bound and a lower bound separately  and then check they are equal. We introduce some known methods of estimation below.

	To authors' knowledge, there are only two practical ways to find upper bounds that are available in any dimension. One is to use volumes given by symplectic forms: $(\pi r^2)^n / {n!} = \mathrm{Vol} \, B^{2n}(r) \leq \mathrm{Vol} \, M$. The other way, introduced by Gromov, is to use $J$-holomorphic curves. Given a point $p \in M$, the existence of a certain $J$-holomorphic curve passing through~$p$ obstructs an embedding of a large ball whose image of the origin is~$p$. Such existence is guaranteed when the Gromov--Witten invariant with a point insertion does not vanish, see Theorem~\ref{thm:Gromov}.
	
	Finding lower bounds is basically finding embeddings. There are methods to do this without explicit construction. One of them is to use action-angle coordinates. While the inequality defining the ball in the standard coordinates is quadratic, it becomes linear in action variables in the new coordinates. Hence it becomes easier to check whether the embedding exists. This method is most effective in the case of toric manifolds, since then we have global action-angle coordinates given by a moment map, see Proposition~\ref{prop:embedding}.
	
By the classification result of Delzant~\cite{Del88}, the symplectic structure of a toric manifold is completely determined by its moment polytope. Hence, in principle, all information on the symplectic structure, including the Gromov width, should be recovered from the polytope. But it seems to be out of reach for the moment to find a general formula for the Gromov width, even if we assume toric manifolds are Fano, see Example~\ref{ex:Fano_bundle}. Instead we focus on 
generalized Bott manifolds, the special case of toric manifolds,
and compute the Gromov width in terms of the defining inequalities of the moment polytope.

To state our main theorem we introduce some notation. In the case of generalized Bott manifolds, the moment polytope~$P$ is combinatorially equivalent to a product of simplices. We label facets of~$P$ using two indices; the lower index~$\ell$ indicates the choice of a simplex and the upper index~$k$ indicates the choice of a facet in each simplex. Let $m$ be the number of simplices. For each $1 \leq \ell \leq m$, we let $n_{\ell}$ denote the dimension of the simplex. Then the polytope~$P$ is written as
\begin{equation}\label{eq:P}
	P = \{x \in \mathbb{R}^n \mid \langle x, u_{\ell}^k \rangle \leq \lambda_{\ell}^k \quad \text{ for } \ell = 1, \dots ,m \text{ and } k=0, \dots, n_{\ell}\},
\end{equation}
for some primitive vectors $u_{\ell}^k \in \mathbb{Z}^n$ and real numbers $\lambda_{\ell}^k \in \mathbb{R}$. To simplify the notation, we set
\begin{equation}\label{eq:u(ell)}
	u(\ell):= \sum_{k=0}^{n_{\ell}} u_{\ell}^k \quad \text{and} \quad \lambda(\ell):= \sum_{k=0}^{n_{\ell}} \lambda_{\ell}^k.
\end{equation}
We remark that there is at least one~$\ell$ satisfying $u(\ell) = 0$ (see~\eqref{eq_u_k_ell}), and $\lambda(\ell) > 0$ for such~$\ell$. Our main theorem is the following.
\begin{theorem}\label{thm:main}
	Let $M$ be a generalized Bott manifold whose symplectic form is given by the polytope~$P$ in~\eqref{eq:P}. Then the Gromov width is given by
	\[
		w_G(M) = \min_{1 \leq \ell \leq m} \{ \lambda(\ell) \mid u(\ell) = 0 \}.
	\]
\end{theorem}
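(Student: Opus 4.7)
The plan is to establish matching upper and lower bounds, using the two tools highlighted in the introduction: the one-point Gromov--Witten obstruction (Theorem~\ref{thm:Gromov}) for the upper bound and toric action-angle embeddings (Proposition~\ref{prop:embedding}) for the lower bound. Concretely, for each $\ell$ with $u(\ell)=0$ I need to produce a rational curve class of symplectic area $\lambda(\ell)$ detected by a non-vanishing GW invariant, and separately to embed a standard simplex of size $\min_\ell\lambda(\ell)$ into the interior of $P$.

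For the upper bound, fix $\ell$ with $u(\ell)=0$. The relation $\sum_k u_\ell^k=0$ determines a homology class $\beta_\ell\in H_2(M;\mathbb{Z})$ characterized by $\beta_\ell\cdot D_\ell^k=1$ and $\beta_\ell\cdot D_{\ell'}^{k'}=0$ for $\ell'\neq\ell$, where $D_\ell^k$ denotes the toric divisor dual to $u_\ell^k$. The standard formula for the symplectic area of a toric curve class then gives $\omega(\beta_\ell)=\sum_k\lambda_\ell^k=\lambda(\ell)$. The crux is to show that the one-point Gromov--Witten invariant $\GW_{\beta_\ell}^M(\mathrm{pt})$ does not vanish; Theorem~\ref{thm:Gromov} then yields $w_G(M)\le\lambda(\ell)$, and minimizing over $\ell$ gives the full upper bound. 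I would prove this non-vanishing via the Seidel representation (flagged among the keywords): for an appropriate Hamiltonian circle subgroup of the torus the associated Seidel element is a unit in the small quantum cohomology of $M$, and its decomposition in the additive basis records precisely the desired count. Because the quantum cohomology of a generalized Bott manifold is explicit, the non-vanishing can plausibly be reduced to the classical case $\GW_{\mathrm{line}}^{\mathbb{P}^{n_\ell}}(\mathrm{pt})=1$ by induction on the tower height $m$.

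For the lower bound, set $a=\min\{\lambda(\ell)\mid u(\ell)=0\}$. By Proposition~\ref{prop:embedding} it suffices, for every $a'<a$, to produce an $\mathrm{AGL}_n(\mathbb{Z})$-image of the open standard simplex of size $a'$ inside the interior of $P$. I would build such an image directly from the data in~\eqref{eq:P}: after an integral affine change of coordinates the facets indexed by some $\ell^\ast$ attaining the minimum can be straightened into the standard simplex inequalities $x_i\ge 0$ and $\sum_i x_i\le\lambda(\ell^\ast)=a$, and a suitable translation then makes every remaining inequality $\langle x,u_{\ell'}^{k'}\rangle\le\lambda_{\ell'}^{k'}$ slack on this simplex. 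The main obstacle is not this combinatorial step but the upper bound: for nontrivially twisted towers one must rule out unwanted curve contributions to $\GW_{\beta_\ell}^M(\mathrm{pt})$, and organizing this uniformly across all generalized Bott manifolds --- likely by a careful choice of the circle action whose Seidel element isolates $\beta_\ell$ --- is what the proof ultimately has to deliver.
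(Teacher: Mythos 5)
Your overall architecture --- an upper bound from Theorem~\ref{thm:Gromov} via the Seidel representation, and a lower bound from Proposition~\ref{prop:embedding} by placing a simplex in~$P$ --- matches the paper. But both halves as written have real gaps, and you have flagged one of them yourself.

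For the upper bound, you aim to show that the specific one-point invariant $\GW_{\beta_\ell}^M(\mathrm{pt})$ is nonzero for the fiber line class $\beta_\ell$ (the paper's $[E]$), reducing by induction on tower height to $\mathbb{P}^{n_\ell}$. This is where your plan will not go through without new ideas: the theorem does not assume $M$ is Fano, and outside the Fano case the Seidel elements $S(u_\ell^k)$ carry higher-order terms $a_\ell^k(B)$ with $B\neq 0$. These terms mean the quantum product $\prod_k S(u_\ell^k) = 1$ no longer forces a nonvanishing count in the single class $\beta_\ell$; various curve classes can contribute and, a priori, could cancel in the class you want. The paper sidesteps this by \emph{not} isolating $\beta_\ell$. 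Instead it argues that some nonvanishing term $(a_\ell^0(B_0')*\dots*a_\ell^{n_\ell}(B_{n_\ell}'))_{A'}$ must have $A'\neq 0$: the key input is Lemma~\ref{lem:divisible}, showing that when $\omega(B)\le\lambda(\ell)$ each $a_\ell^k(B)$ is a multiple of $[D_\ell^k]$, so the putative classical term is a multiple of $\prod_k [D_\ell^k]=0$ and cannot survive. Then $\omega(A')\le\lambda(\ell)$ follows from bookkeeping and Theorem~\ref{thm:Gromov} applies to $A'$, not $\beta_\ell$. You correctly identify that ``ruling out unwanted curve contributions'' is the core difficulty, but a reduction to $\GW_{\mathrm{line}}^{\mathbb{P}^{n_\ell}}(\mathrm{pt})=1$ cannot supply the missing argument without the Fano hypothesis, which the paper avoids precisely because it wants a theorem for \emph{all} generalized Bott manifolds.

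For the lower bound, your sketch is too coarse to produce an $n$-dimensional simplex. Straightening only the facets $F_{\ell^*}^k$ ($k=0,\dots,n_{\ell^*}$) gives the correct picture in the $n_{\ell^*}$ coordinates of block~$\ell^*$, but that is $n_{\ell^*}+1$ hyperplanes in $\mathbb{R}^n$, not $n+1$; it cuts out a slab, not a simplex. One still has to choose $n-n_{\ell^*}$ independent directions spanning the other blocks, and there the remaining inequalities $\langle x,u_{\ell'}^{0}\rangle\le\lambda_{\ell'}$ interact nontrivially because of the twisting vectors $\mathbf{a}_{j,\ell}$. In the paper this is where the inductive construction of the points $v_\ell^k$ in~\eqref{equation_def_of_vjk} and Lemma~\ref{lemma_admissible} are essential: one descends along chains of nonzero entries of the $u_\ell^0$'s to propagate the inequality $\lambda_\ell\ge\lambda$ from the blocks with $u(\ell)=0$ up to all $\ell$, and designs $v_\ell^k$ so that Lemma~\ref{lemma_checking_inequalities_for_P} certifies containment. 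Merely ``translating so that every remaining inequality is slack'' is precisely what can fail when the tower is nontrivially twisted; the combinatorics of the admissible chains is not optional bookkeeping but the substance of the argument.
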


Combining results by Caviedes Castro~\cite{CC16} and Fang--Littelmann--Pabiniak~\cite{FLP17}, the Gromov width of partial flag manifolds were recently computed. On the other hand, we do not know the general formula for the Gromov width of toric manifolds. There are some related partial results. Many of them consider the case when toric manifolds are Fano, probably because it is easier to compute the Gromov--Witten invariants. Under the Fano assumption, the upper bound given by Theorem~1.2 in~\cite{Lu06} is particularly useful. See Theorem~\ref{thm:Lu} for the statement. This estimate can be confirmed to be sharp in many cases. Example~\ref{ex:Fano_bundle} might be a possible candidate where such estimate is not optimal, but we do not know any method of finding a sharper bound. Moreover, even without the Fano condition, we do not know any example of a toric manifold for which this estimate is not an upper bound. We summarize these questions in Section~\ref{sec:Examples}.

Theorem~\ref{thm:main} is proved by combining Proposition~\ref{prop_lower_bounds_Gromov_width} and Proposition~\ref{prop:upper_bound}. We essentially use the combinatorics of the moment polytope~$P$ in the proof. Lemma~\ref{lemma_checking_inequalities_for_P}, which is a simple observation by the fact that $P$ is combinatorially equivalent to the product of simplices, plays a crucial role in the proof of Proposition~\ref{prop_lower_bounds_Gromov_width}. This is also used in the proof of Proposition~\ref{prop:upper_bound}. Since we do not assume our generalized Bott manifolds to be Fano, we cannot use Theorem~\ref{thm:Lu} to get an upper bound. Instead, we use the results on the Seidel representation by McDuff and Tolman~\cite{MT06}. The Seidel morphism is a group homomorphism
\[
	S \colon \pi_1(\Ham(M,\omega)) \to QH^0(M; \Lambda)^{\times}
\]
where the group structure on the target is given by the (small) quantum product. Since the quantum product is defined using the genus zero Gromov--Witten invariants, we obtain some information on the Gromov--Witten invariants by studying this homomorphism. Using this information together with the combinatorics of~$P$, we obtain the same upper bound as in Theorem~\ref{thm:Lu} without Fano assumption.

\section{Generalized Bott manifolds}\label{sec:GBM}
Generalized Bott manifolds are projective toric manifolds 
whose topology were studied in \cite{CMS10-quasitoric} and  
\cite{CMS-Trnasaction}. We first introduce generalized Bott manifolds
as toric manifolds, and then consider symplectic structures on them.

\begin{definition}[{\cite[Definition 6.1]{CMS10-quasitoric}}]
	A \defi{generalized Bott tower} $\{B_j \mid j=0,\dots,m\}$ of
	height $m$ (or an {$m$-stage generalized Bott tower}) is a sequence,
	\[
	\begin{tikzcd}
	B_m \arrow[r, "\pi_m"]
	& B_{m-1} \arrow[r, "\pi_{m-1}"]
	& \cdots \arrow[r, "\pi_3"]
	& B_2 \arrow[r, "\pi_2"]
	& B_1 \arrow[r, "\pi_1"]
	& B_0 = \{\text{a point}\},
	\end{tikzcd}
	\]
	of manifolds $B_j = \mathbb{P}(E_j^{(1)} \oplus \cdots \oplus E_j^{(n_j)} \oplus
	\underline{\C})$ where $E_j^{(k)}$ is a holomorphic line bundle over 
	$B_{j-1}$ for $k=1, \dots, n_j$ and $\underline{\C}$ is the trivial line bundle over~$B_{j-1}$,
	and $\mathbb{P}(\cdot)$ stands for the induced projective bundle.
	We call $B_j$ the \defi{$j$-stage generalized Bott manifold}
	of the generalized Bott tower.
\end{definition}
\begin{example}
	Every complex projective space $\mathbb{P}^{n+1}$ is a $1$-stage generalized Bott manifold.
	Also the product of projective spaces $\mathbb{P}^{n_1+1} \times 
	\cdots \times \mathbb{P}^{n_m+1}$ is a generalized Bott manifold. 
	When $n_j=1$ for all $j=1,\dots,m$, the generalized
	Bott tower is called the Bott tower, which was first defined in~\cite{GrKa94}.
\end{example}

It is known from \cite[Exercise II.7.9]{Ha77} that 
the Picard group of the $j$-stage generalized Bott manifold $B_{j}$ is the free abelian group of rank~$j$ for
$j=1,\dots,m$. 
Hence for each holomorphic line bundle $E_j^{(k)}$
over $B_{j-1}$, we are given $j-1$ many integers $\aaa{k}{j}{1},\dots,\aaa{k}{j}{j-1}$
associated to the line bundle~$E_j^{(k)}$.

A projective bundle of the sum of holomorphic line bundles 
over a toric manifold is known to be a toric manifold (see~\cite[Proposition 7.3.3]{CLS11}).
Hence an $m$-stage generalized Bott manifold $B_m$ is
a toric manifold. The fan structure of~$B_m$ is described as follows.
Put ${n:= \sum_{j=1}^m n_j}$.
The fan~$\Sigma$ of an $m$-stage generalized Bott manifold~$B_m$ is a smooth complete fan in $\mathbb{R}^{n}$ with~${n+m}$ many rays. Let $\{ e^1_1,\dots,e^{n_1}_1,\dots,e^1_m,\dots,e^{n_m}_m\}$ be the standard basis of $\mathbb{R}^n$. 
The ray generators 
$u^0_1,\dots,u^{n_1}_1,
\dots,u^0_m, \dots,u^{n_m}_m$ in $\mathbb{Z}^n$
are given by
\begin{align}
	\begin{split}\label{eq_u_k_ell}
		u^k_{\ell} &= e^k_{\ell} \quad \text{ for }k = 1,\dots,n_{\ell},\\
		u^0_{\ell} &= (\underbrace{\mathbf 0}_{n_1}, \dots, \underbrace{\mathbf 0}_{n_{\ell-1}}, 
		\underbrace{\mathbf{-1}}_{n_{\ell}}, \underbrace{\mathbf{a}_{\ell+1,\ell}}_{n_{\ell+1}},
		\dots, \underbrace{\mathbf{a}_{m,\ell}}_{n_m}),
	\end{split}
\end{align}
for $\ell=1,\dots,m$, where
\begin{equation}
	\mathbf{a}_{j,\ell}:= (\aaa{1}{j}{\ell},\dots,\aaa{n_j}{j}{\ell}) \in \mathbb{Z}^{n_j}.
\end{equation}
Moreover there is a one-to-one correspondence between the set of maximal cones in $\Sigma$ 
and the set $S$ of sequences:
\begin{equation}\label{eq:vertex}
	S:= \{ (s_1,\dots,s_m) \mid s_{\ell} \in \{0,1,\dots,n_{\ell}\} \quad \text{for } \ell =1,\dots,m\}.
\end{equation}
For a given sequence $(s_1,\dots,s_m) \in S$,
we have a maximal cone generated by ray generators
\[
\{u^k_{\ell} \mid k \in \{0,1,\dots,n_{\ell}\} \setminus \{s_{\ell}\} \text{ and } \ell=1,\dots,m\}.
\]
Since we have $|S| = (n_1+1)\cdots (n_m+1)$, there are
$(n_1+1)\cdots (n_m+1)$ many maximal cones.
\begin{example}
Consider the case~$m=2$ with $n_1 = 1$ and $n_2 = 1$. Suppose four ray generators are given by
\[
u^0_1 = (-1,-1),\quad u^1_1 = (1,0),\quad u^0_2 = (0,-1), \quad u^1_2 = (0,1).
\]
We have four maximal cones
\[
\text{Cone}(u^0_1, u^0_2), \quad
\text{Cone}(u^1_1, u^0_2), \quad
\text{Cone}(u^0_1, u^1_2), \quad
\text{Cone}(u^1_1, u^1_2).
\]
The fan $\Sigma$ defines a generalized Bott manifold,
which is the Hirzebruch surface $\mathcal{H}_1 = \mathbb{P}(\mathcal{O}(1)
\oplus \underline{\C})$. 
See Figure~\ref{figure:Hirzebruch_fan}. 
\end{example}

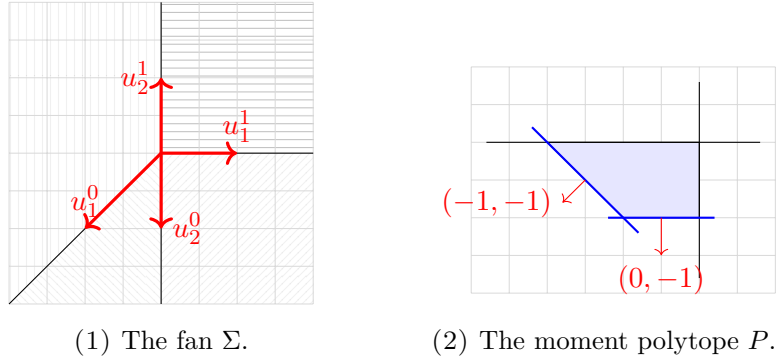
\begin{figure}
	\begin{subfigure}[t]{0.45\textwidth}
		\begin{center}
			\begin{tikzpicture}
			
			\fill[pattern = horizontal lines, nearly transparent] (0,0) -- (0,2) -- (2,2) -- (2,0) -- cycle;
			\fill[pattern = vertical lines, nearly transparent] (0,0) -- (0,2) -- (-2,2) -- (-2,-2) -- cycle;
			\fill[fill=purple!10!white, pattern = north west lines, nearly transparent] (0,0) -- (-2,-2) -- (0,-2) -- cycle;
			\fill[fill=green!10!white, pattern = north east lines, nearly transparent] (0,0) -- (2,0) -- (2,-2) -- (0,-2) -- cycle;
			
			\draw[step = 0.5, gray!30!white, very thin] (-2,-2) grid (2,2);
			
			\draw (0,0) -- (0,2);
			\draw (0,0) -- (2,0) ;
			\draw (0,0) -- (-2,-2);
			\draw (0,0) -- (0,-2);
			\draw[very thick,->,red] (0,0) -- (1,0) node[ anchor =south] {$u^1_1$};
			\draw[very thick,->,red] (0,0) -- (0,1) node[anchor =east] {$u^1_2$};
			\draw[very thick, ->,red] (0,0) -- (0,-1) node[anchor = west] {$u^0_2$};
			\draw[very thick, ->,red] (0,0) -- (-1,-1) node[anchor = south] {$u^0_1$};
	
			\end{tikzpicture}
		\end{center}
		\caption{The fan $\Sigma$.}
		\label{figure:Hirzebruch_fan}
	\end{subfigure}
	\begin{subfigure}[t]{0.45\textwidth}
		\begin{center}
			\begin{tikzpicture}
			\draw[step = 0.5, gray!30!white, very thin] (-3,-2) grid (1,1);
			\filldraw[fill=blue!10!white] (-2,0) -- (-1,-1) -- (0,-1) -- (0,0) -- cycle;
			
			\draw (-2.8,0) -- (0.8,0);
			\draw (0,-1.8) -- (0,0.8);
			
			\draw[thick, blue] (-2.2,0.2) -- (-0.8,-1.2);
			\draw[thick, blue] (-1.2,-1) -- (0.2,-1);
			
			\draw[red,->] (-1.5,-0.5) -- (-1.8,-0.8) 
			node[anchor = east] {$(-1,-1)$};
			\draw[red,->] (-0.5,-1) -- (-0.5,-1.5) node[anchor = north] {$(0,-1)$};
			\end{tikzpicture}
		\end{center}
		\caption{The moment polytope $P$.}
		\label{figure_example_polytope_P}
	\end{subfigure}
	\caption{The fan $\Sigma$ and the moment polytope $P$ of
		the Hirzebruch surface $\mathcal{H}_1$.}
\end{figure}

Let $B_m$ be a generalized Bott manifold and $\Sigma$ be the fan of $B_m$.
Let $\lambda_1,\dots,\lambda_m$ be real numbers.
Then the fan $\Sigma$ and these real numbers define the following
polytope $P$.
\begin{equation}\label{equation_P_equalities}
P = \{  x \in \mathbb{R}^n_{\leq 0} \mid 
\langle  x, u_{\ell}^0 \rangle \leq \lambda_{\ell} \quad
\text{ for } \ell = 1,\dots,m \}.
\end{equation}
We remark that the total space of a projective bundle over 
a projective variety is a projective variety (see~\cite[Section 7.0]{CLS11}).
Hence a generalized Bott manifold $B_m$ is a projective variety since 
$B_m$ is obtained by an iterated sequence of projectivizations.
Then there exists a polytope whose normal fan is $\Sigma$ 
by Theorem~7.2.10 in \cite{CLS11}.
Therefore there exist 
integers $\lambda_1,\dots,\lambda_m$ such that 
the polytope in~\eqref{equation_P_equalities}
is combinatorially equivalent to the product of simplices $\prod_{j=1}^m
\Delta^{n_j}$. 

We describe a symplectic structure on a generalized Bott manifold $B_m$ 
using Delzant's Theorem which says that a Delzant polytope determines
a symplectic toric manifold (see \cite{Del88}, \cite[Theorem 2.1.2]{CS01Symplectic}).
Here we call a polytope $P \subset \mathbb{R}^n$ \defi{Delzant} if it
satisfies:
\begin{itemize}
	\item there are $n$ edges meeting at each vertex;
	\item for each vertex $p$, each edge is of the form $p + t u_i$,
	$t \geq 0$ where $u_i \in \mathbb{Z}^n$ and vectors
	$u_1,\dots,u_n$ form 
	a $\mathbb{Z}$-basis of $\mathbb{Z}^n$.
\end{itemize}
Suppose that $\lambda_1,\dots,\lambda_m$ are real numbers such that
the polytope $P$ in \eqref{equation_P_equalities} is combinatorially
equivalent to 
the product of simplices $\prod_{j=1}^m \Delta^{n_j}$.
Then one can easily see that the polytope $P$ is a Delzant polytope, and moreover the normal fan of $P$ is the fan $\Sigma$ of $B_m$. 
Hence
we obtain a symplectic toric manifold $(B_m, \omega)$ from this polytope~$P$.
\begin{example}
Let $m=2$ with $n_1= 1$ and $n_2 = 1$. 
Let $u^0_1 = (-1,-1)$, $u^0_2 = (0,-1)$, $\lambda_1 = 2$ and
$\lambda_2 = 1$.
Then the polytope
$P \subset \mathbb{R}^2$ in \eqref{equation_P_equalities} is defined by
\[
P = \{ x \in \mathbb{R}^2_{\leq 0} 
\mid \langle x, (-1,-1) \rangle \leq 2, ~~
\langle x, (0,-1) \rangle \leq 1\}.
\]
As one can see in Figure~\ref{figure_example_polytope_P} the polytope $P$ is combinatorially
equivalent to the product $\Delta^1 \times \Delta^1$ of simplices. Hence we obtain a symplectic toric
manifold $(\mathcal{H}_1, \omega)$ from the polytope $P$.
\end{example}

We describe the combinatorics of the polytope $P$
more precisely for later uses.
Let $\{v_j^0, v_j^1,\dots,v_j^{n_j}\}$ 
be the set of vertices of the $n_j$-simplex $\Delta^{n_j}$. 
A facet of the polytope $P$ is a product of a facet of
one of $\Delta^{n_j}$'s and the remaining simplices. Hence
the set of facets of the polytope $P$ is 
\[
\mathcal{F}(P) = \bigcup_{\ell=1}^m \{ F_{\ell}^k 
\mid k = 0,1,\dots,n_{\ell}\},
\]
where 
\[
F_{\ell}^k = \Delta^{n_1} \times \cdots \times \Delta^{n_{\ell-1}} 
\times f_{\ell}^k \times \Delta^{n_{\ell+1}} \times \cdots \times \Delta^{n_m}.
\]
Here $f_{\ell}^k \subset \Delta^{n_{\ell}}$ is the facet which is opposite to the
vertex $v_{\ell}^k \in \Delta^{n_{\ell}}$ for $k=0,1,\dots,n_{\ell}$. 

Let $I$ be a subset of $[m]:= \{1,\dots,m\}$. Consider
the face 
\begin{equation}\label{eq_face_FI}
F_I := \bigcap_{\ell \in [m] \setminus I}
\bigcap_{k \in [n_{\ell}]} F^k_{\ell}.
\end{equation}
Since the facet $F^k_{\ell} \subset P$ can be described by
$
F^k_{\ell} =
\{x \in \mathbb{R}^{n} \mid 
\langle x, u^k_{\ell} \rangle = 0
\} \cap P$ for $k = 1,\dots,n_{\ell}$,
we have that
\[
\begin{split}
F_I = \{
x = (\mathbf x_1,\dots,\mathbf x_m) \in \mathbb{R}^{n_1}_{\leq 0} \oplus 
\cdots \oplus \mathbb{R}^{n_m}_{\leq 0}
\mid 
& \langle x, u_{\ell}^0 \rangle \leq \lambda_{\ell} \text{ if } \ell \in I\text{, and } \\
&\quad 
\mathbf x_{\ell} = 0 \text{ if } \ell \in [m] \setminus I
\}.
\end{split}
\]
The following lemma directly comes from the above observation:
\begin{lemma}\label{lemma_checking_inequalities_for_P}
Suppose that a point $x = (\mathbf x_1, \dots,\mathbf x_m) \in \mathbb{R}^{n_1}_{\leq 0} \oplus 
\cdots \oplus \mathbb{R}^{n_m}_{\leq 0}$ satisfies that
\begin{align*}
	\begin{cases}
		\langle x, u^0_{\ell} \rangle \leq \lambda_{\ell} 
		\quad &\text{ if } \ell \in I, \\
		\; \mathbf x_{\ell} = 0 \quad &\text{ if } \ell \in [m] \setminus I
	\end{cases}
\end{align*}
for some subset $I \subset [m]$. Then $x$ is contained in $P$.
\end{lemma}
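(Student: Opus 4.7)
The plan is to verify every defining inequality of $P$ for the point $x$. The coordinate inequalities $(\mathbf{x}_\ell)_k \leq 0$ are part of the hypothesis, and the inequalities $\langle x, u^0_\ell\rangle \leq \lambda_\ell$ for $\ell \in I$ are also given. So I need only establish $\langle x, u^0_\ell\rangle \leq \lambda_\ell$ for each $\ell \in [m]\setminus I$. For such $\ell$, the formula~\eqref{eq_u_k_ell} for $u^0_\ell$, combined with $\mathbf{x}_\ell = 0$ and $\mathbf{x}_j = 0$ for $j \in [m]\setminus I$, simplifies $\langle x, u^0_\ell\rangle$ to $\sum_{j \in I,\, j > \ell}\langle \mathbf{x}_j, \mathbf{a}_{j,\ell}\rangle$, which is the quantity to be bounded by $\lambda_\ell$.

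My approach is to observe that the set $A$ of all points satisfying the hypothesis is a bounded convex polytope, and then verify the desired inequality at each vertex of $A$. Boundedness follows from a descending recursion on $\ell \in I$: for the largest such $\ell$, the constraints $(\mathbf{x}_\ell)_k \leq 0$ together with $\langle x, u^0_\ell\rangle \leq \lambda_\ell$ confine $\mathbf{x}_\ell$ to a simplex of size $\lambda_\ell$; for smaller $\ell$, the analogous pair confines $\mathbf{x}_\ell$ to a simplex whose size depends only on the already-bounded blocks $\mathbf{x}_j$ with $j > \ell$, $j \in I$. Since the linear functional $\langle \cdot, u^0_\ell\rangle$ attains its maximum over $A$ at a vertex, it suffices to establish the inequality at every vertex of $A$.

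The main obstacle is the identification of the vertices of $A$. For each $\ell \in I$ the $n_\ell + 1$ inequalities $(\mathbf{x}_\ell)_k \leq 0$ (for $k = 1, \dots, n_\ell$) and $\langle x, u^0_\ell\rangle \leq \lambda_\ell$ play the role of the defining inequalities of a simplex in the $\ell$-block, so at a vertex of $A$ exactly $n_\ell$ of them are tight and the single slack inequality records an index $s_\ell \in \{0, 1, \dots, n_\ell\}$. Extending by $s_\ell = 0$ for $\ell \in [m]\setminus I$ produces a sequence $s$ in $S_I := \{s \in S : s_\ell = 0 \text{ for } \ell \notin I\}$. Because the equation $\langle x, u^0_\ell\rangle = \lambda_\ell$, when active, couples $\mathbf{x}_\ell$ only to $\mathbf{x}_j$ with $j > \ell$, the resulting linear system is block-triangular in descending $\ell$ and admits a unique solution, which coincides with the vertex of $P$ associated to $s$ via the bijection of~\eqref{eq:vertex}. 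Since every such vertex lies in $P$, the inequality $\langle \cdot, u^0_\ell\rangle \leq \lambda_\ell$ holds at all vertices of $A$ and hence on all of $A$, completing the proof.
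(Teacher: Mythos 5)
The paper's proof is short and identifies the set you call $A$ with the face $F_I$ of $P$ defined in \eqref{eq_face_FI}: since $P$ is combinatorially a product of simplices, every facet of the face $F_I$ is of the form $F^k_\ell \cap F_I$ with $\ell \in I$, so within the affine hull $\{\mathbf x_\ell = 0 : \ell \notin I\}$ the inequalities indexed by $\ell \in I$ already cut out $F_I \subset P$. Your proof takes a genuinely different route through vertices, and the setup is correct and clean: reducing to the inequalities for $\ell \notin I$, establishing that $A$ is a bounded convex polytope via descending recursion, and reducing the inequality check to the vertices of $A$ by linearity.

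However, the crux of your argument --- the claim that at every vertex of $A$ exactly $n_\ell$ of the $n_\ell + 1$ block-$\ell$ inequalities are active for each $\ell \in I$ --- is asserted rather than proved, and it is not immediate. A vertex of $A$ only guarantees a collection of $\dim A$ linearly independent tight constraints; without further input, nothing rules out a degenerate vertex at which, say, all $n_\ell + 1$ constraints of one block are active while fewer than $n_{\ell'}$ are active in another. The block-triangular structure yields the partial-sum bound $\sum_{j \le \ell,\, j \in I} T_j \ge \sum_{j \le \ell,\, j \in I} n_j$ for the tight-constraint counts $T_j$, not $T_j = n_j$ individually. To close the gap one must argue that such a degenerate vertex would be a vertex of $P$ lying on all $n_\ell + 1$ facets $F^0_\ell, \dots, F^{n_\ell}_\ell$ of a simplex factor, which is impossible when $P$ is combinatorially $\prod_j \Delta^{n_j}$; this is precisely the same combinatorial input the paper uses, and once made explicit it does rescue your vertex-matching step. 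As written, though, the claim ``exactly $n_\ell$ are tight'' is the one non-trivial point of the lemma and it is stated without justification, so the proof as given has a genuine gap there.
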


\section{Lower bounds}\label{sec:lower_bounds}

In this section we find lower bounds for the Gromov width of generalized Bott manifolds. By Proposition~\ref{prop:embedding}, the existence of a symplectic embedding of a ball into the symplectic toric manifold is guaranteed by a certain polytope embedded in the moment polytope~$P$. Although it is enough to find a simplex in~$P$ in our case, we state the proposition in a more general form for the arguments in Section~\ref{sec:Examples}. An explicit construction of the embedded simplex in~$P$ is given in Proposition~\ref{prop_lower_bounds_Gromov_width}.

We begin by recalling the Arnold--Liouville theorem (see for example \cite[Theorem~III.3.3]{Aud04}). The theorem asserts that, given a completely integrable system on a symplectic manifold $(M^{2n}, \omega)$, a compact connected regular level set~$L$ is a Lagrangian torus. Moreover, $L$ has a neighborhood~$U$ with coordinates
\[
	(\mathbf{a}, \pmb{\alpha}) \in \mathbb{R}^n \times (\mathbb{R}/\mathbb{Z})^n
\]
such that $L$ is given by a level set of~$\mathbf{a}$, and $\omega = \sum da_i \wedge d\alpha_i$ on~$U$. Such coordinates are called \defi{action-angle coordinates}. When our symplectic manifold $(M, \omega)$ is toric, the integrable system given by the moment map $\mu \colon M \to P \subset \mathbb{R}^n$ provides us global coordinates in the sense of Duistermaat~\cite[Theorem~2.2]{Dui80}. Precisely, we have the following symplectomorphism (see \cite[Remark~IV.4.19]{Aud04})
\begin{equation}\label{eq:action-angle}
	\left(\mu^{-1}(\mathrm{Int}\, P), \omega\right) \cong \left(\mathrm{Int}\, P \times T^n, \sum da_i \wedge d\alpha_i\right).
\end{equation}

The method using action-angle coordinates to find a symplectic embedding was used by many authors, including \cite{Traynor95, Sch05, Lu06, LMS13, MP17}. To explain this method in more detail, we define some terminologies. The word ``distorted'' in the following definition is taken from~\cite[Section~4.2]{LMS13}.
\begin{definition}
A \defi{distorted cross-polytope of length~$\rho$ of dimension~$n$}, denoted by $\Diamond^n(\rho)$, is the convex hull of line segments $L_1,\dots,L_n$ in~$\mathbb{R}^n$ satisfying the following conditions:
\begin{itemize}
	\item $L_1 \cap \cdots \cap L_n = \{p\}$ for some point $p \in \mathbb{R}^n$ (we say $\Diamond^n(\rho)$ is centered at~$p$);
	\item the primitive vectors parallel to the line segments~$L_i$ form a basis for $\mathbb{Z}^n$; and
	\item each $L_i$ has length $\ell(L_i) = \rho$.

\end{itemize}
\end{definition}
Here the \defi{length}~$\ell(L)$ of the line segment $L$ with rational slope is defined as follows. After translation and multiplication by some scalar~$c>0$, we may assume that the line segment~$c \cdot L$ has integral endpoints. Then the length is given by
\begin{equation}\label{eq:length}
	\ell(L) := \frac{\#(c \cdot L \cap \mathbb{Z}^n)-1}{c}.
\end{equation}
For instance, if $L$ connects $(0,0)$ and $(3/2,3)$ then $2 \cdot L$ connects integral points, and the length of $L$ is~$3/2$, see Figure~\ref{figure_length_of_a_line}. We remark that the simplex~$\Delta(\rho)$ with edge length~$\rho$ is a distorted cross-polytope centered at a vertex of~$\Delta(\rho)$.
\begin{figure}
\begin{tikzpicture}[scale=0.5]
\draw[step = 1, gray!30!white, very thin] (-1,-1) grid (7,7);

\draw (-0.8,0) -- (6.8,0);
\draw (0,-0.8) -- (0,6.8);

\draw[thick, blue, dashed] (0,0) -- (3,6) node[anchor = east] {$2\cdot L$};
\draw[very thick, green!50!black] (0,0) -- (1.5, 3) node[anchor = east] {$L$};

\foreach \x/\xtext in {1,...,6}
	\draw(\x,1pt) -- (\x,-1pt) node[anchor = north] {\small{$\xtext$}};
\foreach \y/\ytext in {1,...,6}
	\draw(1pt,\y) -- (-1pt,\y) node[anchor = east] {\small{$\ytext$}};

\fill[orange!30!red] (0,0) circle (4pt) ;
\fill[orange!30!red] (1,2) circle (4pt) ;
\fill[orange!30!red] (2,4) circle (4pt) ;
\fill[orange!30!red] (3,6) circle (4pt) ;

\end{tikzpicture}
\caption{The length of a line segment}
\label{figure_length_of_a_line}
\end{figure}
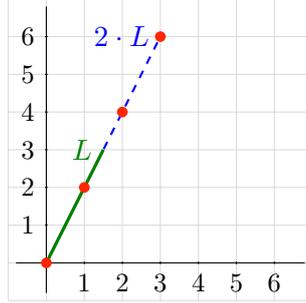

\begin{definition}
For $0 < s \leq 1$, let $f_s \colon \mathbb{R} \to \mathbb{R}_{\geq 0}$ be a piecewise linear function defined by
\begin{align}
	f_s(a) :=
	\begin{cases}
	a / s  &\text{ if } a \geq 0, \\
	a / (s-1)&\text{ if }a < 0 \text{ and } s < 1,\\
	0& \text{ if }a<0 \text{ and } s =1.
	\end{cases}
\end{align}
For any $\rho>0$ and $0 < s_i \leq 1$, $i=1, \dots, n$, define
\begin{equation}
	\Diamond_{s_1, \dots, s_n}(\rho):= \left\{(a_1, \dots, a_n) \in \mathbb{R}^n \; \middle| \; \sum_{i=1}^n f_{s_i}(a_i) \leq \rho \right\}.
\end{equation}
\end{definition}
It follows from the definition that $\Diamond_{s_1, \dots, s_n}(\rho)$ is the convex hull of line segments connecting two points $\rho (s_i - 1) e_i$ and $\rho s_i e_i$ for $i=1, \dots, n$, where $e_i$ denotes the $i$th standard basis vector of $\mathbb{R}^n$. Hence, it is a distorted cross-polytope of length~$\rho$ centered at the origin.

\begin{proposition}[\cite{LMS13} Section~4.2, \cite{MP17} Proposition~5]\label{prop:embedding}
	Let $(M, \omega)$ be a symplectic toric manifold of dimension~$2n$ with the moment polytope~$P \subset \mathbb{R}^n$. If the polytope~$P$ contains a distorted cross-polytope~$\Diamond^n(\rho)$, then the Gromov width of $(M, \omega)$ is at least~$\rho$.
\end{proposition}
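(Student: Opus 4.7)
The plan is to combine a toric normalization of the cross-polytope with Traynor-type ``origami'' embeddings, reducing the problem, via action-angle coordinates, to an explicit embedding between standard model regions.

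First, I would normalize the given $\Diamond^n(\rho) \subset P$. By definition, its center $p$ together with the primitive vectors $v_1, \dots, v_n$ parallel to its $n$ line segments form an affine $\mathbb{Z}$-basis, so there is an affine lattice automorphism of $\mathbb{R}^n$ sending $(p, v_1, \dots, v_n)$ to $(0, e_1, \dots, e_n)$. Such an automorphism lifts through Delzant's construction to a symplectomorphism of $(M, \omega)$ covering the affine map on moment polytopes; after applying it, the cross-polytope becomes $\Diamond_{s_1,\dots,s_n}(\rho)$ for some $s_i \in (0,1]$, still contained in (the image of) the moment polytope. Hence I may assume $\Diamond_{s_1,\dots,s_n}(\rho) \subset P$.

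Second, invoking the Arnold--Liouville--Duistermaat identification \eqref{eq:action-angle}, it suffices to construct, for every $\rho' < \rho$, a symplectic embedding of the open ball $B^{2n}(r)$ with $\pi r^2 = \rho'$ into $\mathrm{Int}\, \Diamond_{s_1,\dots,s_n}(\rho) \times T^n$, where the latter carries the form $\sum da_i \wedge d\alpha_i$. The standard toric action-angle chart on $B^{2n}(r)$, with moment map $z \mapsto (\pi|z_1|^2, \dots, \pi|z_n|^2)$, identifies the complement of the $T^n$-axes in the open ball symplectically with $\mathrm{Int}\, \Delta^n(\rho') \times T^n$, where $\Delta^n(\rho') = \{a \in \mathbb{R}_{\geq 0}^n : \sum a_i \leq \rho'\}$; the axes form a codimension-two subset and can be added back by continuity at the end.

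Third, I would produce an explicit symplectic embedding $\mathrm{Int}\, \Delta^n(\rho') \times T^n \hookrightarrow \mathrm{Int}\, \Diamond_{s_1,\dots,s_n}(\rho) \times T^n$ by a coordinate-wise folding. In each $(a_i, \alpha_i)$-factor, the interval $(0, \rho')$ is split at $a_i = s_i \rho'$; the subinterval $(0, s_i \rho')$ is mapped identically into the positive half $[0, s_i \rho)$ of the $i$-th segment of $\Diamond_{s_1,\dots,s_n}(\rho)$, while the subinterval $(s_i \rho', \rho')$ is mapped by $(a_i, \alpha_i) \mapsto (s_i \rho' - a_i, -\alpha_i)$ into the negative half $((s_i - 1)\rho, 0]$. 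Each piece preserves $da_i \wedge d\alpha_i$, and containment in $\Diamond_{s_1,\dots,s_n}(\rho)$ reduces to the inequality $\sum_i f_{s_i}(\mathrm{image}(a_i)) = \sum_i a_i \leq \rho' < \rho$ on the source simplex.

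The main obstacle will be assembling the coordinate-wise fold into an honest smooth symplectic embedding of $\mathrm{Int}\, \Delta^n(\rho') \times T^n$ and ensuring the $2^n$ pieces produced by the folds are pairwise disjoint in the target. This requires either smoothing near each fold locus $\{a_i = s_i \rho'\}$ with a $T^n$-rotation supported in a small neighborhood, or working with the open complement of these codimension-one loci and arguing that the excised set maps into a codimension-two piece of the target. This is precisely the construction carried out in \cite[Section 4.2]{LMS13} and \cite[Proposition 5]{MP17}, whose arguments I would invoke after completing the normalization in the first step.
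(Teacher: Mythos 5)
Your overall plan has the same skeleton as the paper's proof: reduce via a lattice-affine normalization so that the cross-polytope is $\Diamond_{s_1,\dots,s_n}(\rho)$, pass to global action-angle coordinates using~\eqref{eq:action-angle}, and then build the ball embedding coordinate-by-coordinate and take a product. However, the crucial $2$-dimensional building block is where your argument breaks down, in two distinct ways.

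First, the inequality you assert is false. With your fold, a point $a_i \in (0, s_i\rho')$ is mapped to itself, so $f_{s_i}(\mathrm{image}(a_i)) = a_i/s_i$, not $a_i$; a point $a_i \in (s_i\rho',\rho')$ is mapped to $s_i\rho' - a_i < 0$, so $f_{s_i}(\mathrm{image}(a_i)) = (a_i - s_i\rho')/(1-s_i)$, again not $a_i$. Consequently $\sum_i f_{s_i}(\mathrm{image}(a_i)) \neq \sum_i a_i$, and the image of $\mathrm{Int}\,\Delta^n(\rho')$ need not lie in $\Diamond_{s_1,\dots,s_n}(\rho)$. For instance with $n=2$, $s_1=s_2=\tfrac12$, $\rho'=1$, and $a_1=a_2=0.4 \in \mathrm{Int}\,\Delta^2(1)$, one gets $\sum_i f_{s_i}(\mathrm{image}(a_i)) = 1.6$, which already exceeds any $\rho$ slightly larger than $\rho'$. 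So even ignoring smoothness issues, the fold does not land in the right target.

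Second, the fold is not an embedding of the full ball, and neither of your proposed fixes repairs this. As written, the map is discontinuous along $\{a_i = s_i\rho'\}$ (the right limit is $s_i\rho'$ and the left limit from the folded piece is $0$); the continuous version $a_i \mapsto 2s_i\rho' - a_i$ is $2$-to-$1$ near the fold. Smoothing by a $T^n$-rotation cannot turn a non-injective or discontinuous map into an embedding, and the fold loci are \emph{codimension one} hypersurfaces in the ball ($\{\pi|z_i|^2 = s_i\rho'\}$), so excising them leaves a disconnected set that is not symplectomorphic to a ball; their image is not codimension two either. The Gromov width requires an embedding of an honest open ball, so this is a genuine gap.

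The paper avoids both problems by replacing the fold with Schlenk's area-preserving \emph{spiral} embedding $\sigma_s \colon B^2(\sqrt{\rho/\pi}-\epsilon) \to \mathbb{R}\times(0,1)$ (Lemma~3.1.5 of~\cite{Sch05}), which is a genuine smooth symplectic embedding of the whole disk and has the quantitative property that $x^2+y^2 < (r-\epsilon)^2$ forces $(s-1)\pi r^2 < a(\sigma_s(x,y)) < s\pi r^2$, hence $f_s(a(\sigma_s(x,y))) < \pi r^2$. This is exactly the estimate that your fold fails to deliver, and it is what makes the product map $\sigma_{s_1}\times\dots\times\sigma_{s_n}$ land inside $\Diamond_{s_1,\dots,s_n}(\rho)\times T^n$. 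If you intend to invoke the construction of~\cite{LMS13} or~\cite{MP17}, you should do so for this step specifically, rather than substituting the naive fold and attempting to repair it afterward.
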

\begin{proof}
	We could not find an explicitly written proof in the literature, so the proof is included for completeness.
	
	After suitable change of basis by $\mathrm{GL}(n, \mathbb{Z})$ and translation, we may assume that $\Diamond^n(\rho) = \Diamond_{s_1, \dots, s_n}(\rho)$ for some numbers $0 < s_1, \dots, s_n \leq 1$. Fix a small number $\epsilon > 0$. For $0 < s \leq 1$, we can find an area preserving embedding (see Lemma~3.1.5 in~\cite{Sch05})
\begin{equation}
	\sigma_s \colon B^2(\sqrt{\rho/\pi} - \epsilon) \to \mathbb{R} \times (0,1) : (x,y) \mapsto (a,\alpha)
\end{equation}
such that for all $r > \epsilon$,
\begin{equation*}
	x^2 + y^2 < (r - \epsilon)^2 \implies (s-1)\pi r^2 < a(\sigma_s(x,y)) < s \pi r^2.
\end{equation*}
This map is illustrated in Figure~\ref{fig:sigma}. By the definition of~$f_s$, it follows that
\begin{equation}
	x^2 + y^2 < (r - \epsilon)^2 \implies f_s(a(\sigma_s(x,y))) < \pi r^2.
\end{equation}
Since each~$\sigma_{s_i}$ preserves the symplectic structure, the product map
\[
	\sigma := \sigma_{s_1} \times \dots \times \sigma_{s_n} \colon B^2(\sqrt{\rho/\pi} - \epsilon)^n \to \mathbb{R}^n \times (0,1)^n
\]
is a symplectic embedding. Considering the image under~$\sigma$ of the point $(\mathbf{x},\mathbf{y}) = (x_1, \dots, x_n, y_1, \dots, y_n) \in B^{2n}(\sqrt{\rho/\pi} - \epsilon)$, we have
\begin{align*}
	\quad \sum_{i=1}^n f_{s_i}(a_i(\sigma(\mathbf{x}, \mathbf{y}))) 
	= \sum_{i=1}^n f_{s_i}(a(\sigma_{s_i}(x_i, y_i))) < \sum_{i=1}^n \pi (x_i^2 + y_i^2) < \rho.
\end{align*}
This inequality shows that 
\[
	\sigma(B^{2n}(\sqrt{\rho/\pi} - \epsilon)) \subset \Diamond^n(\rho) \times (0,1)^n \subset \Diamond^n(\rho) \times T^n.
\]	
Using the symplectomorphism in~\eqref{eq:action-angle}, we obtain a symplectic embedding of $B^{2n}(\sqrt{\rho/\pi} - \epsilon)$ into~$M$.
\end{proof}

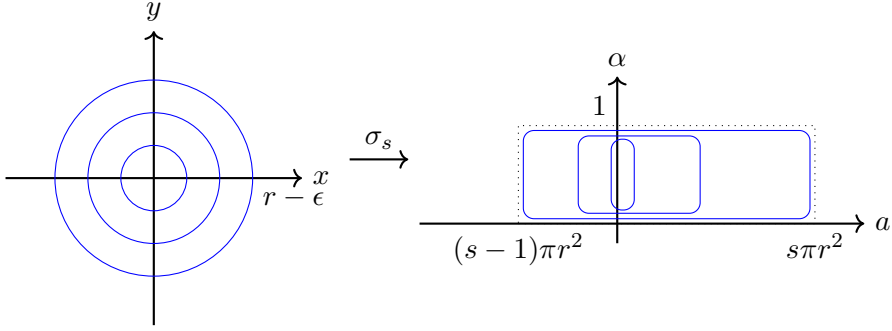
\begin{figure}
\begin{tikzpicture}
{
	\def\k{3};
	\node (a) at (0,0)
	{
		\begin{tikzpicture}[scale=1.3]
		{
			\draw[->, thick] (-1.5,0) -- (1.5,0) node [right] {$x$};
			\draw[->, thick] (0,-1.5) -- (0,1.5) node [above] {$y$};
			\foreach \j in {1,...,\k} \draw[color = blue] (0,0) circle (\j/\k);
			\node [below right] at (1,0) {$r - \epsilon$};
		}
		\end{tikzpicture}
	};
	\node (b) at (a.east)[anchor=east, xshift = 210]
	{
		\begin{tikzpicture}[scale=1.3]
		{
			\draw[->, thick] (-2,0) -- (2.5,0) node [right] {$a$};
			\draw[->, thick]  (0,-.2) -- (0,1.5) node [above] {$\alpha$};
			\foreach \j in {1,...,\k} \draw[rounded corners, color = blue]
			({-(\j/\k)^2 + 0.05}, {0.1*(1- (\j/\k)^2) + 0.05}) -- ({2* (\j/\k)^2 - 0.05}, {0.1*(1- (\j/\k)^2) + 0.05})
			-- ({2* (\j/\k)^2 - 0.05}, {1 - 0.1*(1- (\j/\k)^2) - 0.05}) -- ({-(\j/\k)^2 + 0.05}, {1 - 0.1*(1- (\j/\k)^2) - 0.05}) -- cycle;
			\node [below] at (2,0) {$s \pi r^2$};
			\node [below] at (-1,0) {$(s-1) \pi r^2$};
			\node [above left] at (0,1) {$1$};
			\draw[rectangle, dotted] (-1,0) -- (2,0) -- (2,1) -- (-1,1) -- cycle;
		}
		\end{tikzpicture}
	};
	\draw[->, thick] (a) -- node [above] {$\sigma_s$} (b);
}
\end{tikzpicture}
\caption{The symplectic map $\sigma_s$}
\label{fig:sigma}
\end{figure}

From now on we focus on generalized Bott manifolds. Let $(B_m, \omega)$ be the generalized Bott manifold with the moment polytope
\begin{equation*}
	P = \{  x \in \mathbb{R}^n_{\leq 0} \mid \langle  x, u_{\ell}^0 \rangle \leq \lambda_{\ell} \text{ for } \ell = 1,\dots,m \}
\end{equation*}
as in~\eqref{equation_P_equalities}. No inequalities should be redundant for defining~$P$, so the real numbers~$\lambda_{\ell}$ are all positive. Since one can change the order of simplices if necessary, there exists an integer $1 \leq d \leq m$ such that
\begin{equation}
	u(\ell) = u_{\ell}^0 + u_{\ell}^1 + \cdots + u_{\ell}^{n_{\ell}} = 0
	\iff \ell \geq d.
\end{equation}
Define a real number $\lambda$ as
\begin{equation}
	\lambda := \min \{ \lambda_d,\dots, \lambda_m \}.
\end{equation}
Note that $\lambda$ coincides with the number given in Theorem~\ref{thm:main}.

\begin{proposition}\label{prop_lower_bounds_Gromov_width}
Let $P$ and $\lambda$ be as above. Then there exists an $n$-simplex~$\Delta(\lambda)$ of length~$\lambda$ embedded in $P \subset \mathbb{R}^n$.
\end{proposition}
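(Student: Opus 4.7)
My plan is to exhibit an explicit $n$-simplex of edge length $\lambda$ inside~$P$, namely the convex hull of a base vertex $v^* \in P$ and the $n$ points $v^* + \lambda w_{\ell, k}$, where the vectors $\{w_{\ell, k}\}$ (indexed by $\ell = 1, \dots, m$ and $k = 1, \dots, n_\ell$) are primitive integer vectors forming a $\mathbb{Z}$-basis of~$\mathbb{Z}^n$. By convexity, to show that this simplex is contained in~$P$ it is enough to check that these $n+1$ vertices lie in~$P$, and then the edge-length condition follows because $w_{\ell, k}$ is primitive.

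For $\ell \geq d$, the hypothesis $u(\ell) = 0$ forces $u_\ell^0 = -\sum_{k'} e_\ell^{k'}$ to be supported only in block~$\ell$, and I take $w_{\ell, k} = -e_\ell^k$. For $\ell < d$, I will take $w_{\ell, k} = -e_\ell^k + y_{\ell, k}$, where $y_{\ell, k}$ is a non-positive integer vector supported in blocks strictly greater than~$\ell$. This keeps each $w_{\ell, k} \leq 0$ componentwise, so $\lambda w_{\ell, k} \in \mathbb{R}^n_{\leq 0}$; the change-of-basis matrix is lower triangular with $-1$'s on the diagonal, so the $w_{\ell, k}$ form a $\mathbb{Z}$-basis; and each $w_{\ell, k}$ is primitive because its $(\ell, k)$-component equals~$-1$. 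For $\ell \geq d$, the facet inequality $\langle x, u_\ell^0 \rangle \leq \lambda_\ell$ at a vertex $\lambda w_{\ell', k'}$ of the simplex reduces to a quantity bounded by~$\lambda$ (since $u_\ell^0$ is supported only in block~$\ell$ and the relevant components of $w_{\ell', k'}$ are non-positive with sum bounded by~$1$), hence by $\lambda_\ell$, so those inequalities are automatic.

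The combinatorial tool driving the rest of the verification is Lemma~\ref{lemma_checking_inequalities_for_P}, which says that once $\mathbf{x}_\ell = 0$ for $\ell \notin I$, the point $x$ lies in~$P$ as soon as the inequalities $\langle x, u_\ell^0 \rangle \leq \lambda_\ell$ hold for $\ell \in I$. This lets me organise the remaining checks stage by stage, working downward from $\ell = m$ to $\ell = 1$, and if necessary to replace the origin by another vertex of~$P$ when some $\lambda_\ell$ with $\ell < d$ is smaller than~$\lambda$. The main obstacle I anticipate is the case $\ell < d$ with mixed-sign entries in $\mathbf{a}_{j, \ell}$: the correction $y_{\ell, k}$ must then be chosen so that $\lambda \langle w_{\ell, k}, u_{\ell'}^0 \rangle \leq \lambda_{\ell'}$ holds for every $\ell'$ simultaneously. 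This is possible because the conditions on the $\lambda_{\ell'}$ forced by the combinatorial equivalence of~$P$ with $\prod_\ell \Delta^{n_\ell}$ provide the necessary slack. I expect the construction to proceed by a downward recursion on~$\ell$, the choice of $y_{\ell, k}$ at each stage being dictated by the signs of the $a$-coefficients and the corrections already fixed at higher stages.
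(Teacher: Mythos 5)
Your skeleton matches the paper's: put the base vertex at the origin, build the $n$ other vertices as $\lambda w_{\ell,k}$ where $w_{\ell,k} = -e_\ell^k$ plus a non-positive correction supported in higher-indexed blocks (so the change-of-basis is lower unitriangular and everything stays in $\mathbb{R}^n_{\leq 0}$), and then invoke Lemma~\ref{lemma_checking_inequalities_for_P} to reduce the membership check to a small set of inequalities. That is indeed the paper's plan, and the paper even pins down the correction explicitly: it is $v^{k'}_{\ell'}/\lambda$, where $(\ell',k')$ locates the last \emph{positive} entry of $u_\ell^0$.

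The genuine gap is that you have not identified, let alone proved, the crucial numerical input that makes the inequalities close. Two things are needed. First, in the base case $w_{\ell,k}=-e_\ell^k$ with $\ell<d$, one must show $\lambda \leq \lambda_\ell$; this is false-looking at first glance since $\lambda$ is defined as a minimum over $\ell \geq d$ only. Second, in the recursive case, the inequality $\langle \lambda w_{\ell,k}, u_\ell^0\rangle \leq \lambda_\ell$ reduces to $\lambda(1-\aaa{k'}{\ell'}{\ell}) + \lambda_\ell \leq \lambda_\ell$, which requires $\aaa{k'}{\ell'}{\ell}\geq 1$ --- hence the specific choice of $(\ell',k')$ as the location of a positive entry. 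The statement in your proposal that ``the conditions on the $\lambda_{\ell'}$ forced by combinatorial equivalence provide the necessary slack'' is exactly the point that must be \emph{proved}, and it is nontrivial: the paper does it via Lemma~\ref{lemma_admissible}, which constructs a chain of indices (admissible sequences) terminating at some $i_M\geq d$ and shows $\lambda_\ell \geq \lambda_{i_M}\geq\lambda$ by testing an auxiliary point of the form $-\lambda_{j_s}\sum_t e^{q_t}_{j_t}$ against the facet inequalities of $P$. Without that chain argument, the proposal's central claim is unsupported. Relatedly, your contingency of ``replacing the origin by another vertex of $P$ when some $\lambda_\ell<\lambda$ for $\ell<d$'' is a sign that you did not see how to close this step: Lemma~\ref{lemma_admissible} shows the relevant inequality always holds, so the origin always works and no such move is needed.
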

\begin{proof}
It is enough to find $n$~points 
$v^1_1,\dots, v^{n_1}_1,\dots, v^1_m, \dots, v^{n_m}_m$ in $P$ such that the set
\[
\{v^1_1/\lambda,\dots, v^{n_1}_1/\lambda,
\dots, v^1_m/\lambda,\dots,v^{n_m}_m/\lambda\}
\]
 forms a $\mathbb{Z}$-basis. 
Let $\{e^1_1,\dots,e^{n_1}_1,\dots,e^1_m,\dots,e^{n_m}_m \}$ 
be the standard basis of~$\mathbb{R}^n$. 
Define points $v^k_{\ell} \in \mathbb{R}^n$ inductively in descending order of~$\ell$ as follows:
\begin{equation}\label{equation_def_of_vjk}
v^k_{\ell}
:= \begin{cases}
- \lambda e^k_{\ell} & \text{if } \ell = m \text{ or } \mathbf{a}_{j,\ell} \in \mathbb{Z}^{n_{j}}_{\leq 0}
\text{ for all } j=\ell+1,\dots,m, \\
-\lambda e^k_{\ell}  + v^{k'}_{\ell'}& \text{otherwise}.
\end{cases}
\end{equation}
In the second case, the pair $(\ell',k')$ is defined as
\begin{itemize}
	\item 
	$\ell' := \max \{ j > \ell \mid \mathbf{a}_{j,\ell} \notin 
	\mathbb{Z}^{n_j}_{\leq 0}\}$,
	\item $k' := \max \{ i \leq n_{\ell'} \mid \aaa{i}{\ell'}{\ell} > 0\}$.
\end{itemize}
The condition on $\ell'$ implies that 
we have $\mathbf a_{j,\ell} \in \mathbb{Z}^{n_j}_{\leq 0}$ for all
$j > \ell'$. The second condition implies that 
$\aaa{k'+1}{\ell'}{\ell},\dots,
\aaa{n_{\ell'}}{\ell'}{\ell} \leq 0$. Hence the last positive entry of the vector $u^0_{\ell}$ is $\aaa{k'}{\ell'}{\ell}$.

Note that the set of points $\{v^k_{\ell}/\lambda\}$ 
forms a $\mathbb{Z}$-basis. Hence it remains to show
the following claim to prove the proposition:

\vspace{1em}
\noindent \textsf{\textbf{Claim.}}
The point $v_{\ell}^k$ is contained in the polytope~$P$ for all $\ell = 1, \dots, m$ and $k = 1, \dots, n_{\ell}$.
\vspace{1em}

\noindent We prove this claim by induction on the number of nonzero entries of the vector~$v_{\ell}^k$, say~$N$.
\vspace{1em}

\noindent\textsf{\textbf{Case 1: \fbox{$v^k_{\ell} = -\lambda e^k_{\ell}$.}}}
This is the first step of the induction when $N=1$. Since the point $v^k_{\ell}
=: (\mathbf v_1,\dots,\mathbf v_m)$ 
is contained in $\mathbb{R}^n_{\leq 0}$
and $\mathbf v_j = \mathbf 0 \in \mathbb{R}^{n_j}_{\leq 0}$ for all $j \neq \ell$, by Lemma \ref{lemma_checking_inequalities_for_P} it is enough to show the inequality
\begin{equation}\label{equation_lambda_and_lambda_ell}
\lambda = \langle v^k_{\ell}, u_{\ell}^0 \rangle \leq \lambda_{\ell}.
\end{equation}

If $\ell \geq d$, then by the definition of $\lambda$, we have $\lambda \leq \lambda_{\ell}$. 
To consider the case when $\ell < d$, we define the sequence $P_{\ell}$ of the pairs of indices
\begin{equation}\label{eq_def_P_ell}
P_{\ell} = ((\ell,k),(i_1,p_1),\dots,
(i_M, p_M))
\end{equation}
inductively as follows:
\begin{itemize}
	\item $i_{v} := 
	\max \{ j > i_{v-1} \mid \mathbf{a}_{j,i_{v-1}} \neq 
	\mathbf 0\}$ with $i_0 = \ell$,
	\item $p_v := \max \{ q \leq n_{i_v} \mid
	\aaa{q}{i_v}{i_{v-1}} \neq 0\}$.
\end{itemize}
The sequence stops when there is no such~$i_v$. We have $i_M \geq d$; otherwise it contradicts the definition of $d$. Note from the definition of the sequence~$P_{\ell}$ that $\ell = i_0 < i_1 < \cdots < i_M \leq m$. The condition on $(i_v, p_v)$ implies that the last nonzero entry of the vector~$u^0_{i_{v-1}}$ is $\aaa{p_v}{i_v}{i_{v-1}}$.

We call a sequence $I = ((j_0,q_0),(j_1,q_1),\dots,(j_{s},q_{s}))$ of pairs of indices with $s \geq 1$ and $j_0<j_1 <  \cdots < j_{s}$
\defi{admissible} if
\begin{equation}
	\begin{cases}
		\aaa{q_1}{j_1}{j_0}  &< 0, \\
		\aaa{q_t}{j_t}{j_{t-1}} &> 0 \quad \text{for } t = 2,\dots,s, \\
		\aaa{q_{t_1}}{j_{t_1}}{j_{t_2}} &=0 \quad \text{for } t_1 - t_2  \geq 2.
	\end{cases}
\end{equation}
We remark that the case when $s=1$ is allowed, so there might be no positive~$\aaa{q_t}{j_t}{j_{t-1}}$. The proof of the following lemma will be provided after the proof of the proposition.
\begin{lemma}\label{lemma_admissible}
Let $I = ((j_0,q_0),(j_1,q_1),\dots,(j_{s},q_{s}))$ be an
admissible sequence. Then $\lambda_{j_{0}} \geq \lambda_{j_s}$.
\end{lemma}
Note from the definition~\eqref{eq_def_P_ell} that $\aaa{p_v}{i_v}{i_{v-1}} \neq 0$ for all~$v$ and $\aaa{p_{v_1}}{i_{v_1}}{i_{v_2}} = 0$ for $v_1 - v_2 \geq 2$. Since $\aaa{p_1}{i_1}{\ell} < 0$, we can always split the sequence~$P_{\ell}$ into admissible pieces as
\[
\begin{split}
& \left((\ell,k), (i_1,p_1), \dots, (i_{b_1},p_{b_1}) \right), ~~
\left((i_{b_1},p_{b_1}),\dots, (i_{b_2},p_{b_2})\right),\dots,\\
&\qquad \left((i_{b_x},p_{b_x}),\dots,(i_M,p_M) \right).
\end{split}
\]
Then by Lemma~\ref{lemma_admissible}, we have inequalities
\[
	\lambda_{\ell} \geq \lambda_{i_{b_1}} \geq
	\cdots \geq \lambda_{i_{b_x}} \geq \lambda_{i_M}.
\]
On the other hand, we have $\lambda_{i_M} \geq \lambda$ since $i_M \geq d$. The inequality~\eqref{equation_lambda_and_lambda_ell} follows by combining these inequalities. This proves the claim for the case $N=1$.

\vspace{1em}
\noindent\textbf{\textsf{Case 2: \fbox{$v^k_{\ell} = -\lambda e^k_{\ell}  + v^{k'}_{\ell'}$.}}}
Note that the number of nonzero entries of~$v_{\ell}^k$ is greater by one than that of~$v_{\ell'}^{k'}$. By the induction hypothesis, we have $\langle v_{\ell'}^{k'}, u_t^0 \rangle \leq \lambda_t$ for $t = 1, \dots, m$. When $t > \ell$, since $\langle e_{\ell}^{k}, u_t^0 \rangle = 0$ we have the inequality
\begin{equation}\label{eq:case2}
	\langle v^k_{\ell}, u_t^0 \rangle \leq \lambda_t.
\end{equation}
Now consider the case when $t = \ell$. There are two possibilities
\begin{equation*}
	v_{\ell'}^{k'} =
	\begin{cases}
	- \lambda e_{\ell'}^{k'} &\text{or,}\\
	-\lambda e_{\ell'}^{k'}  + v_{\ell''}^{k''} \quad &\text{for some $\ell''$ and $k''$.}
	\end{cases}
\end{equation*}
Since $\langle v_{\ell''}^{k''}, u_{\ell}^0 \rangle \leq \lambda_{\ell}$ by the induction hypothesis, in either case we have
\[
	\langle v^k_{\ell}, u_{\ell}^0 \rangle
	\leq \langle - \lambda e^k_{\ell} - \lambda e^{k'}_{\ell'}, u_{\ell}^0 \rangle + \lambda_{\ell}
	= \lambda (1 -  \aaa{k'}{\ell'}{\ell}) + \lambda_{\ell}
	\leq \lambda_{\ell},
\]
where the last inequality comes from the fact that $\aaa{k'}{\ell'}{\ell} \geq 1$ by the definition of~$\ell'$ and~$k'$.

We have proved that the inequality~\eqref{eq:case2} holds when $t \geq \ell$. Now the inequality~\eqref{eq:case2} holds for any $t=1, \dots, m$ by Lemma~\ref{lemma_checking_inequalities_for_P}, which completes the induction argument.
\end{proof}

\begin{proof}[Proof of Lemma \ref{lemma_admissible}]
Let $I = ((j_0,q_0),(j_1,q_1),\dots,(j_{s},q_{s}))$ be an admissible sequence. Consider the set $J := \{j_1,\dots,j_{s}\}$ and let $F_J$ be the face of $P$ defined as in \eqref{eq_face_FI}. First we claim that
\[
	v := -\lambda_{j_{s}}(e^{q_1}_{j_1}+\cdots+e^{q_s}_{j_s}) \in F_J.
\]
By Lemma~\ref{lemma_checking_inequalities_for_P}, it is enough to show
\begin{equation}\label{eq_point_v_inequalities}
	\langle v, u^0_{\ell} \rangle \leq \lambda_{\ell} \quad \text{for } \ell \in J.
\end{equation}
Recall the following expression
\[
	u^0_{\ell} = (\underbrace{\mathbf{0},\dots, \mathbf{0}}_{\ell-1},
	\mathbf{-1}, \mathbf{a}_{\ell+1, \ell}, \dots, \mathbf{a}_{m, \ell}).
\]
By the definition of the admissible sequence, we have
\begin{align*}
	\langle e_{j_t}^{q_t}, u_{\ell}^0 \rangle
	= \begin{cases}
		\aaa{q_t}{j_t}{\ell} \geq 1 &\text{for } \ell = j_{t -1},\\
		-1 &\text{for } \ell = j_t, \\
		0 &\text{otherwise.}
	\end{cases}
\end{align*}
Then $\langle v, u_{j_s}^0 \rangle = \lambda_{j_s}$ and 
\[
\langle v, u_{j_t}^0 \rangle = -\lambda_{j_s} (-1 + \aaa{q_{t+1}}{j_{t+1}}{j_t}) \leq 0 < \lambda_{j_s}
\] 
for $t<s$, which proves the inequality~\eqref{eq_point_v_inequalities}
so that $v \in F_J \subset P$.

Since $v \in F_J \subset P$, we have the inequality
$\langle v, u^{0}_{j_0} \rangle \leq \lambda_{j_0}$.
Computing the left hand side we obtain
\[
\langle v, u^0_{j_0} \rangle
= -\lambda_{j_s} \aaa{q_1}{j_1}{j_0} \geq \lambda_{j_s}
\]
since the value $\aaa{q_1}{j_1}{j_0}$ is negative.
Hence we have the inequality
$\lambda_{j_s} \leq \lambda_{j_0}$.
\end{proof}

\begin{example}\label{example_embedded_simplex_B5}
Consider the generalized Bott manifold~$B_5$ with $n_j = 1$ for $j \leq 4$ and $n_5 = 2$. The ray vectors are given by the following matrix
\[
	[u^0_1 \ u^0_2 \ u^0_3 \ u^0_4 \ u^0_5] =
	\left[
	\begin{array}{c|c|c|c|c}
		-1 & 0 & 0 & 0 & 0\\ \hline
		-3 & -1 & 0 & 0 & 0\\ \hline
		0 & 1 & -1 & 0 & 0\\ \hline
		0 & -2 & -4 & -1 & 0\\ \hline
		0 & -1 & 0 & -2 & -1\\
		0 & 0 & 0 & 2 & -1
	\end{array}
	\right].
\]
Then $d = 5$, so put $\lambda = \lambda_5$. Let $\{ e_1^1, e_2^1, e_3^1, e_4^1, e_5^1, e_5^2 \}$ be the standard basis of~$\mathbb{R}^6$. Since there are no positive entries in the fifth, the third and the first column, we take $v_5^1 = -\lambda e_5^1$, $v_5^2 = -\lambda e_5^2$, $ v_3^1 = -\lambda e_3^1$ and $v_1^1 = - \lambda e_1^1$. In the fourth column, the last positive entry~$2$ is located at the 2nd place of the 5th block. We take $v_4^1 = -\lambda e_4^1 + v_5^2 = -\lambda(e_4^1 + e_5^2)$. In the second column the last positive entry~$1$ is located at the 1st pace of the 3rd block, hence $v_2^1 = -\lambda e_2^1 + v_3^1 = -\lambda(e_2^1 + e_3^1)$. We have the following six points in $\mathbb{R}^6$:
\begin{equation}
\begin{split}
v_{1}^1 = - \lambda e_1^1, \quad 
v_{2}^1 = -\lambda(e_2^1 + e_3^1),\quad
v_{3}^1 = - \lambda e_3^1, \\
v_{4}^1 = - \lambda (e_4^1 + e_5^2), \quad 
v_{5}^1 = - \lambda e_5^1, \quad
v_{5}^2 = - \lambda e_5^2.
\end{split}
\end{equation}
We can compare the values~$\lambda_{\ell}$ using Lemma~\ref{lemma_admissible}. For the $\ell$th column with non-positive entries with $\ell < d$, we have sequences
\[
	P_1 = ((1,1), (2,1), (5,1)) \quad \text{and} \quad P_3 = ((3,1), (4,1), (5,2)).
\]
The sequence~$P_3$ is admissible but $P_1$ is not. Then we split $P_1$ into two admissible sequences $((1,1), (2,1))$ and $((2,1), (5,1))$. By Lemma~\ref{lemma_admissible},
\begin{equation}\label{eq:lambda}
	\lambda_1 \geq \lambda_2 \geq \lambda_5 \quad \text{and} \quad \lambda_3 \geq \lambda_5.
\end{equation}
We demonstrate the last inequality following the proof of Lemma~\ref{lemma_admissible}. In this case $J = \{4,5\}$. Using Lemma~\ref{lemma_checking_inequalities_for_P}, the vector $v:= -\lambda_5 (e_4^1 + e_5^2)$ is contained in $F_J \subset P$ since
\begin{align}
	\begin{split}\label{eq:v}
		& \langle v, u_4^0 \rangle = \langle (0,0,0, -\lambda_5, 0, -\lambda_5), (0,0,0,-1,-2,2) \rangle = -\lambda_5 < \lambda_4, \\
		& \langle v, u_5^0 \rangle = \langle (0,0,0, -\lambda_5, 0, -\lambda_5), (0,0,0,0,-1,-1) \rangle = \lambda_5.
	\end{split}
\end{align}
Solving the inequality $\langle v, u_3^0 \rangle \leq \lambda_3$, we obtain $\lambda_5 \leq 4\lambda_5 \leq \lambda_3$ which proves the last inequality in~\eqref{eq:lambda}.

To check whether the points~$v_{\ell}^k$ are contained in $P$, we use Lemma~\ref{lemma_checking_inequalities_for_P} again. For $\ell = 1,3,5$, it is enough to check $\langle v_{\ell}^k, u_{\ell}^0 \rangle \leq \lambda_{\ell}$. The left hand side is equal to~$\lambda = \lambda_5$, so the inequality follows from~\eqref{eq:lambda}. The case when $\ell = 4$ was done in~\eqref{eq:v}. Considering the case $\ell = 2$, we check
\begin{align*}
& \langle v_{2}^1, u_2^0 \rangle = 
\langle (0, -\lambda, -\lambda, 0, 0, 0),(0, -1, 1, -2, -1, 0) \rangle = 
0 < \lambda_2, \\
&\langle v_{2}^1, u_3^0 \rangle
= \langle (0, -\lambda, -\lambda, 0, 0, 0), (0, 0, -1, -4, 0, 0) \rangle
= \lambda_5 \leq  \lambda_3.
\end{align*}
Therefore we have six points $v_1^1, v_2^1, v_3^1, v_4^1, v_5^1, v_5^2$ which, together with the origin, form a $6$-simplex of length~$\lambda$ embedded in~$P$. 
\end{example}

\section{Upper bounds}\label{section:upper_bound}
In this section, we find an upper bound for the Gromov width using the Gromov--Witten invariants. In the case of generalized Bott manifolds, Lemma~\ref{lem:divisible} provides us more information on the terms given in Theorem~\ref{thm:MT}. The upper bound is given in Proposition~\ref{prop:upper_bound}, whose proof uses this information together with Theorem~\ref{thm:Gromov}.

First we briefly review the definition of the Gromov--Witten invariants and explain how they can be used to estimate the Gromov width. We only consider the genus zero invariants with primary insertions because these are sufficient for our purposes. We refer the reader to~\cite{MS12} for details for the case when the symplectic manifold is semi-positive. The definition of the Gromov--Witten invariants for general symplectic manifolds requires the construction of the virtual fundamental class and can be found in the recent articles~\cite{Cas16, McD14, Par16, TF17}. In these definitions, the virtual fundamental class may be considered as an element in the rational \v Cech homology group of the moduli space. See \cite{ES52} for the definition and properties of \v Cech homology groups. See also \cite[ Section~1]{IP17}, \cite[Section~4.3]{TF17}  and  Remark~8.2.4. in~\cite{MW18} with explanations below it.

Let $(M, \omega)$ be a closed symplectic manifold of dimension~$2n$.  Given an $\omega$-tame almost complex structure~$J$ and $A \in H_2(M; \mathbb{Z})$, consider the moduli space~ $\overline{\mathcal{M}}_{0,k}^M(A,J)$ of $J$-holomorphic stable maps of genus zero to~$M$ in the class~$A$ with $k$ marked points. Using the notation $c_1(A) := \langle c_1(TM, J), A \rangle$, this space carries a virtual fundamental class in the rational \v Cech homology group
\[
	[\overline{\mathcal{M}}]^{vir} \in \check H_{2n + 2c_1(A)+ 2k - 6} \, (\overline{\mathcal{M}}_{0,k}^M(A,J); \mathbb{Q})
\]
which is independent of the choice of~$J$. Let $ev \colon \overline{\mathcal{M}}_{0,k}^M(A,J) \to M^k$ be the evaluation map and $\pi \colon \overline{\mathcal{M}}_{0,k}^M(A,J) \to \overline{\mathcal{M}}_{0,k}$ be the forgetful map, whose target is a smooth manifold of dimension $2k-6$. For $\beta \in H_*(\overline{\mathcal{M}}_{0,k}; \mathbb{Q})$ and $\alpha_i \in H^*(M; \mathbb{Q})$ for $i=1, \dots, k$, the \defi{Gromov--Witten invariant} is defined to be the rational number
\begin{equation}
		\GW_{A,k}^M(\alpha_1, \dots, \alpha_k; \beta):= \langle (ev \times \pi)_* [\mathcal{\overline{M}}]^{vir}, \alpha_1 \times \dots \times \alpha_k \times PD(\beta) \rangle \in \mathbb{Q}.
\end{equation}

The following theorem follows from the definition of the Gromov--Witten invariants and the idea Gromov used to prove his nonsqueezing theorem.
\begin{theorem}[Gromov]\label{thm:Gromov}
Let $(M, \omega)$ be a closed symplectic manifold. If $\GW_{A,k}^M([pt], \alpha_2, \dots, \alpha_k; \beta) \neq 0$ for some $A \neq 0$, $k \geq 3$, $\alpha_i \in H^*(M; \mathbb{Q})$ and $\beta \in H_*(\overline{\mathcal{M}}_{0,k}; \mathbb{Q})$, then $w_G(M) \leq \omega(A)$.
\end{theorem}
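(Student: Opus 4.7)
The plan is to combine the geometric content of a non-vanishing Gromov--Witten invariant with a point insertion with Gromov's monotonicity argument for $J$-holomorphic curves. By the definition of $w_G(M)$, it suffices to show that whenever a symplectic embedding $\phi \colon B^{2n}(r) \hookrightarrow M$ exists, one has $\pi r^2 \leq \omega(A)$.

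First I would fix a small $\epsilon>0$ and choose an $\omega$-tame almost complex structure $J$ on $M$ whose pullback by $\phi$ agrees with the standard complex structure $J_0$ on $B^{2n}(r-\epsilon)$; such $J$ exists because $\phi_* J_0$ is $\omega$-tame on the embedded ball and the space of $\omega$-tame almost complex structures is contractible, so one can interpolate outward. Next, the hypothesis $\GW_{A,k}^M([pt], \alpha_2, \dots, \alpha_k; \beta) \neq 0$ forces the evaluation of the virtual class at $[pt] \in H_0(M; \mathbb{Q})$ in the first factor to be non-trivial. This produces a non-constant $J$-holomorphic genus-zero stable map $u \colon \Sigma \to M$ in class $A$ whose image contains $p := \phi(0)$. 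Picking a component of $\Sigma$ that passes through a preimage of $p$ and restricting further to the connected component of its preimage in $\phi(B^{2n}(r-\epsilon))$ that meets $0$, we obtain a proper $J_0$-holomorphic curve in $B^{2n}(r-\epsilon)$ passing through the origin. Lelong's monotonicity inequality for such curves says its symplectic area is at least $\pi(r-\epsilon)^2$. Since the total $\omega$-energy of $u$ equals $\omega(A)$ and dominates this local area, we obtain $\omega(A) \geq \pi(r-\epsilon)^2$; letting $\epsilon \to 0$ gives the claim.

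The main obstacle lies in the step that extracts an honest $J$-holomorphic curve through the prescribed point $p$ from the non-vanishing of a virtual invariant, since outside the semi-positive setting the moduli space need not be cut out transversely. The standard way around this is a perturbation-and-compactness argument: if no $J$-holomorphic stable map in class $A$ met the compact set $\phi(\overline{B^{2n}(r-\epsilon/2)})$, then the same would hold for every $J'$ in a $C^\infty$-neighbourhood of $J$ by Gromov compactness, so the virtual pushforward $(ev_1)_*[\overline{\mathcal{M}}]^{vir}$ would vanish on a neighbourhood of $p$ and hence on $[pt]$, contradicting the hypothesis. This uses deformation invariance of the virtual fundamental class established in the references \cite{Cas16, McD14, Par16, TF17}. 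Once the curve is produced, the monotonicity inequality is a classical Kähler estimate and causes no difficulty; the entire weight of the theorem sits in this virtual-class continuity step.
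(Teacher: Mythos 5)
Your proposal is correct and follows essentially the same route as the paper: extract from the non-vanishing invariant a $J$-holomorphic stable map in class $A$ through the prescribed point $p=\phi(0)$, where $J$ is arranged to be standard on a slightly smaller embedded ball, and then apply the classical Lelong monotonicity estimate. The one substantive addition you make is spelling out the virtual-class step (perturbation of $J$, Gromov compactness, and deformation invariance to conclude that the evaluation image cannot avoid a neighbourhood of~$p$), which the paper asserts briefly by saying the image of the moduli space under $ev\times\pi$ intersects $p\times N$; this elaboration is accurate and fills in the part the paper leaves implicit. One small point you should make explicit: when you pass to a component of $\Sigma$ through a preimage of $p$, you need that component to be \emph{non-constant}, otherwise its preimage in the ball is not a proper curve. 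The paper handles this by observing that since $A\neq 0$ not every component of the connected domain $\Sigma$ can be constant, and a non-constant component through $p$ can be found by pushing outward along the tree from any constant components landing on $p$; the remaining inequality chain $\pi(r-\epsilon)^2 \le \mathrm{Area}\,C \le \omega(A_i) \le \omega(A)$ then goes through exactly as you wrote.
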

\begin{proof}
	Let $N \subset M^{k-1} \times \overline{\mathcal{M}}_{0,k}$ be a submanifold representing (a nonzero multiple of) $\alpha_2 \times \dots \times \alpha_k \times PD(\beta)$. By the assumption, the image of $\overline{\mathcal{M}}_{0,k}^M(A,J)$ under $ev \times \pi$ intersects~$p \times N$ for any $p \in M$ and $J$. In other words, for any $\omega$-tame almost complex structure~$J$ and any point $p \in M$, there exists a $J$-holomorphic stable map $u \colon \Sigma \to M$ in the homology class~$A$ such that $p \in \mathrm{Im} \, u$. Here the domain~$\Sigma$ may be considered as a connected union of spheres. Since $A \neq 0$, there exists a non-constant component $u_i \colon S^2 \to M$ in the class $A_i$ whose image contains~$p$. Now the proof follows from the argument of Gromov~\cite{Gr85}, which we sketch here for the reader's convenience.
	
	Let $\iota \colon B^{2n}(r) \hookrightarrow M$ be a symplectic embedding with $p:= \iota(0)$. Let $J_0$ denote the standard complex structure of~$\mathbb{C}^n$. Then $\iota_*J_0$ together with~$\omega$ determines a Riemannian metric~$g_0$ on the image. For any small $\epsilon > 0$, using the partition of unity, we can construct a Riemannian metric~$g$ on~$M$ such that $g=g_0$ on~$\iota \left(B^{2n}(r - \epsilon)\right)$. We take an almost complex structure~$J$ so that $g(\cdot,\cdot) = \omega(\cdot, J \cdot)$. Now consider
	\[
		C:= \iota^{-1}(\mathrm{Im}\, u_i) \cap B^{2n}(r - \epsilon).
	\]
	Since $C$ is the image of a $J_0$-holomorphic curve $\iota^{-1} \circ u_i$ for the standard complex structure~$J_0$, it is a minimal surface in $B^{2n}(r - \epsilon)$ passing through the origin. By the monotonicity property of minimal surfaces, we have
	\[
		\pi (r - \epsilon)^2 \leq \mathrm{Area} \, C \leq \omega(A_i) \leq \omega(A).
	\]
	Since $\epsilon$ can be chosen arbitrarily, we obtain the desired inequality.
\end{proof}

\begin{remark}
	It seems reasonable to expect that Theorem~\ref{thm:Gromov} holds for any definition of the Gromov--Witten invariants in the existing literature but we could not check this.
\end{remark}

To find such Gromov--Witten invariants, it is useful to use the Seidel representation on the (small) quantum cohomology ring. We begin with recalling the definition of the quantum cohomology ring. Details can be found in Chapter~11 of~\cite{MS12}.

Consider the Novikov ring
\begin{equation}\label{eq:Novikov}
	\Lambda:= \Lambda^{\mathrm{univ}}[q,q^{-1}]
\end{equation}
where $q$ is a variable of degree~$2$ and
	\[
	\Lambda^{\mathrm{univ}}:= \left\{\sum_{i \in \mathbb{N}}
	r_i t^{\kappa_i} \;\Big|\; r_i \in \mathbb{Q}, \; \kappa_i \in \mathbb{R}, \; \lim_{i \to \infty} \kappa_i = \infty \right\}
	\]
with $\deg t =0$. The \defi{quantum cohomology ring} with coefficients in~$\Lambda$ is an abelian group
\begin{equation}
	QH^*(M; \Lambda):= H^*(M; \mathbb{Q}) \otimes_{\mathbb{Q}} \Lambda
\end{equation}
together with the product~$\ast$ defined as follows. Let $a \in H^i(M; \mathbb{Q})$ and $b \in H^j(M; \mathbb{Q})$. Then
\begin{equation}
	a*b := \sum_{A \in H_2(M; \mathbb{Z})} (a*b)_A \otimes q^{c_1(A)} t^{\omega(A)}
\end{equation}
where $(a*b)_A \in H^{i+j-2c_1(A)}(M; \mathbb{Q})$ is defined uniquely by the condition
\begin{equation}
	\int_M (a*b)_A \cup c = \GW_{A,3}^M (a,b,c;[pt])
\end{equation}
for all $c \in H^*(M; \mathbb{Q})$. This product extends linearly on~$\Lambda$ and is called the \defi{quantum product}. The associativity of the product follows from the splitting axiom of the Gromov--Witten invariants. Moreover, iterated products can be computed as follows: for $a_i \in H^{d_i}(M; \mathbb{Q})$, their product is equal to
\begin{equation}
	a_1 * \dots * a_k = \sum_{A \in H_2(M; \mathbb{Z})} (a_1 * \dots * a_k)_A \otimes q^{c_1(A)} t^{\omega(A)}
\end{equation}
where $(a_1 * \dots * a_k)_A \in H^{(\sum_{i=1}^k d_i) -2c_1(A)}(M; \mathbb{Q})$ is defined by the relation
\begin{equation}\label{eq:quantum_iterated_product}
	\int_M (a_1 * \dots * a_k)_A \cup c = \GW_{A,k+1}^M(a_1, \dots, a_k, c; [pt])
\end{equation}
for all $c \in H^*(M; \mathbb{Q})$. In the special case when $A=0$, we remark that
\begin{equation}\label{eq:quantum_product_A=0}
	(a_1 * \dots * a_k)_0 = a_1 \cup \dots \cup a_k.
\end{equation}

The Seidel representation was first introduced in~\cite{Sei97}. We will use the version defined in~\cite{LMP99}, where the authors define a group homomorphism, called the \defi{Seidel morphism},
\begin{equation}
	S \colon \pi_1(\Ham(M,\omega)) \to QH^0(M; \Lambda)^{\times}
\end{equation}
from the fundamental group of the Hamiltonian diffeomorphism group to the abelian group of the degree~$0$ units in the quantum cohomology ring. Since the quantum product is defined using the Gromov--Witten invariants, the Seidel morphism can be used to detect some nonvanishing Gromov--Witten invariants. For example, McDuff used this homomorphism in~\cite{McD09} to show that any Hamiltonian $S^1$-manifold is uniruled.\footnote{A symplectic manifold is called uniruled if there is a nonzero genus zero Gromov--Witten invariant with a point insertion.} When a circle acts on~$M$ in Hamiltonian way, we have a nice representative of the loop in $\Ham(M, \omega)$. In~\cite{MT06}, McDuff and Tolman developed methods to compute the image of such loops under~$S$. Since toric symplectic manifolds are our main interest, we explain their results when $(M, \omega)$ is toric.

Let $(M, \omega)$ be a closed symplectic manifold of dimension~$2n$ with an effective Hamiltonian $T^n$-action. Let $\mu \colon M \to \mathbb{R}^n$ be its moment map whose image is a convex simple polytope~$P$. Then for some primitive vectors $\eta_i \in \mathbb{Z}^n$ and real numbers $\kappa_i \in \mathbb{R}$ for $i=1, \dots, N$, we have
\begin{equation}
	P = \{ x \in \mathbb{R}^n \mid \langle x, \eta_i \rangle \leq \kappa_i \quad\text{ for } i=1, \dots, N\}.
\end{equation}
Each vector~$\eta_i$ determines a Hamiltonian sub-circle action on~$M$, hence represents a loop in~$\Ham(M, \omega)$. We say a $J$-holomorphic stable map is \defi{$\eta_i$-invariant} if the image is invariant under the corresponding circle action. The map defined by $\mu_i:= \langle \mu, \eta_i \rangle \colon M \to \mathbb{R}$ is a moment map for this action. Let $D_i$ denote the toric divisor corresponding to~$\eta_i$, which is the maximum fixed component with respect to the moment map~$\mu_i$. The moment map~$\mu$ or the polytope~$P$ is called \defi{normalized} if $\int_M \mu_i \, \omega^n =0$ for all~$i$. This is equivalent to the condition that the center of mass of~$P$ is located at the origin.
\begin{theorem}[Theorem~1.10 and Lemma~3.10 in~\cite{MT06}]\label{thm:MT}
Let $\mu \colon M \to P$ be the normalized moment map. Then the following hold.
\begin{enumerate}
	\item The image of~$\eta_i$ under the Seidel morphism is given by
\[
	S(\eta_i) = \sum_{B \in H_2(M; \mathbb{Z})} a_i(B) \otimes q^{-1 + c_1(B)} t^{-\kappa_i + \omega(B)}
\]
for some $a_i(B) \in H^{2-2c_1(B)}(M; \mathbb{Q})$.
	\item $a_i(B) = 0$ when $\omega(B) < 0$ or $\omega(B) = 0$ with $B \neq 0$.
	\item $a_i(0) = [D_i]$. Here $[\cdot]$ denotes the cohomology class represented by a submanifold.
	\item $a_i(B) = 0$ for $B \neq 0$ when $M$ is Fano.
	\item Let $N$ be an $\eta_i$-invariant submanifold of~$M$. Then $\int_M a_i(B) \cup [N] = 0$ if the homology class~$B$ cannot be represented by an $\eta_i$-invariant \mbox{$J$-holomorphic} stable map intersecting both the toric divisor~$D_i$ and the submanifold~$N$. 
\end{enumerate}
\end{theorem}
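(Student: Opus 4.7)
The plan is to prove the theorem via the standard construction of the Seidel representation as counts of pseudo-holomorphic sections of the Hamiltonian fibration associated to the loop~$\eta_i$. Concretely, for a sub-circle action with moment map~$\mu_i$, form the bundle $P_{\eta_i} = (M \times D^2_+ \sqcup M \times D^2_-) / \sim$ over $S^2$ where the identification over the equator is by the circle action, and equip the total space with a closed $2$-form~$\Omega$ that restricts to~$\omega$ on the fibers and satisfies the normalization $\int_{P_{\eta_i}} \Omega^{n+1} = 0$. The normalization of the moment map ($\int_M \mu_i \omega^n = 0$) is exactly what ensures that the "minimal" section class~$\sigma_{\max}$, obtained from the maximum fixed component of~$\mu_i$, has $\Omega$-area equal to $-\kappa_i$ and first Chern number~$-1$ (the latter from Morse-theoretic analysis of~$\mu_i$ near its maximum). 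The Seidel element is defined as
\[
	S(\eta_i) = \sum_{\sigma} \mathrm{PD}\bigl((ev_{\sigma})_\ast [\overline{\mathcal{M}}(P_{\eta_i}, \sigma)]^{vir}\bigr) \otimes q^{c_1(\sigma)} t^{\Omega(\sigma)},
\]
where~$\sigma$ ranges over section classes. Writing $\sigma = \sigma_{\max} + B$ for $B \in H_2(M; \mathbb{Z})$ and setting $a_i(B) := \mathrm{PD}((ev_{\sigma_{\max}+B})_\ast [\overline{\mathcal{M}}]^{vir})$ reindexes the sum as in~(1), with degree $2n - (2n + 2c_1(\sigma) - 2) = 2 - 2c_1(B)$.

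For~(2), the key is that for a generic $\Omega$-tame almost complex structure on~$P_{\eta_i}$, any $J$-holomorphic section represents a class of nonnegative $\Omega$-area, so $\Omega(\sigma_{\max} + B) \geq \Omega(\sigma_{\max})$, giving $\omega(B) \geq 0$; equality with $B \neq 0$ would force a reducible section with a nontrivial bubble of zero area, which cannot be $J$-holomorphic and nonconstant. For~(3), the class $\sigma_{\max}$ has a unique $J$-holomorphic representative for a suitable equivariant~$J$: the constant section through each point of the maximum fixed set $D_i$; a straightforward Fredholm count shows $a_i(0) = [D_i]$. For~(4), if $M$ is Fano then $c_1(B) > 0$ for any $B \neq 0$ effective, so $a_i(B)$ lies in negative-degree cohomology and vanishes, while non-effective $B$ are excluded by~(2).

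For~(5), one exploits that when the ambient circle action on $P_{\eta_i}$ (which rotates the base and acts on the fiber by~$\eta_i$) extends, the moduli space of sections inherits a circle action. Choosing an $\eta_i$-invariant~$J$ and localizing to the fixed locus reduces the integral $\int_M a_i(B) \cup [N]$, for~$N$ an $\eta_i$-invariant submanifold, to a count of $\eta_i$-invariant holomorphic stable maps in class~$B$ passing through both~$D_i$ (where the section concentrates) and~$N$; absence of such configurations forces the integral to vanish.

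The main technical obstacle is the rigorous construction of the virtual fundamental class on the moduli space of sections, together with verifying that the localization in~(5) is compatible with this virtual class in non-semipositive situations; this is where the delicate framework (polyfolds or Kuranishi structures) is needed, and where one must take care to stay within the conventions for which the product structure in Theorem~\ref{thm:Gromov} is valid. The remaining ingredients (the normalization calculation, positivity of area, and the Morse-theoretic identification of $\sigma_{\max}$ with $D_i$) are comparatively routine once the framework is in place.
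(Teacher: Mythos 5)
This result is not proved in the paper at all: it is cited verbatim as Theorem~1.10 and Lemma~3.10 of McDuff--Tolman~\cite{MT06}, and the paper assumes it as known. So your blind attempt is a reconstruction of an argument the paper itself never reproduces. That said, your outline matches the actual route of~\cite{MT06} (and the paper's own brief description of that route in the proof of Lemma~\ref{lem:T-inv}): construct the Hamiltonian fibration $P_{\eta_i} \to S^2$, identify the minimal section class $\sigma_{\max}$ through the maximum fixed component $D_i$, and read off the $q$- and $t$-exponents from the coupling form and vertical Chern class, with part~(5) coming from a localization to invariant holomorphic sections.

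However, your argument for part~(4) has a real gap. You claim that Fano forces $c_1(B) > 0$ for effective $B \neq 0$, so that $a_i(B) \in H^{2-2c_1(B)}(M;\mathbb{Q})$ ``lies in negative-degree cohomology and vanishes.'' But $c_1(B) = 1$ places $a_i(B)$ in $H^0(M;\mathbb{Q}) \cong \mathbb{Q}$, which is nonzero, and Fano toric manifolds certainly carry effective curve classes with $c_1 = 1$: the exceptional curve $E$ in the Hirzebruch surface $\mathcal{H}_1 \cong \mathbb{P}^2 \# \overline{\mathbb{P}^2}$ satisfies $c_1(E) = -K_{\mathcal{H}_1}\cdot E = 1$. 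Pure degree counting therefore cannot kill these terms; the Fano case in~\cite{MT06} relies on the finer analysis of invariant holomorphic sections that you invoke only for part~(5). A smaller imprecision occurs in part~(2): since the normalization gives $\Omega(\sigma_{\max}) = -\kappa_i < 0$, the assertion ``any $J$-holomorphic section represents a class of nonnegative $\Omega$-area'' cannot be what you mean. What one actually needs is that $\sigma_{\max}$ minimizes $\Omega$-area among section classes with $J$-holomorphic representatives, which is a genuine moment-geometric input (essentially a maximum-principle type argument from~\cite{MT06}), not a positivity-of-area truism.
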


To make use of the toric action to compute~$a_i(B)$, we consider a slight variation of Lemma~3.10 from~\cite{MT06}.
\begin{lemma}\label{lem:T-inv}
	In the last statement of Theorem~\ref{thm:MT}, we can replace an \mbox{$\eta_i$-invariant} $J$-holomorphic stable map with a $T^n$-invariant one.
\end{lemma}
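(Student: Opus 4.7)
The plan is to mimic McDuff and Tolman's proof of Lemma~3.10, but choose an almost complex structure invariant under the full torus rather than just under~$S^1_{\eta_i}$, and use the extra symmetry to extract the stronger conclusion. Since the integer $\int_M a_i(B)\cup [N]$ depends only on the cohomology class $[N]$ and on $a_i(B)$, both of which are independent of the choice of~$J$, I am free to choose $J$ to be $\omega$-tame and $T^n$-invariant; such a~$J$ exists by averaging any $\omega$-tame almost complex structure over the Hamiltonian $T^n$-action. With this choice, the entire McDuff--Tolman construction produces a $T^n$-equivariant virtual fundamental class on~$\overline{\mathcal{M}}_{0,k}^M(B,J)$, a $T^n$-equivariant evaluation map, and a $T^n$-invariant forgetful map.

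Next I would reduce to the case that~$N$ is itself $T^n$-invariant. This is possible because the rational cohomology of a toric manifold is generated by the Poincar\'e duals of the toric divisors, which are $T^n$-invariant cycles; so one may replace $N$ by a $T^n$-invariant cycle in the same class without affecting the integral. Applying McDuff--Tolman's original statement with this new cycle shows that $\int_M a_i(B) \cup [N]=0$ unless the class~$B$ is represented by an $\eta_i$-invariant $J$-holomorphic stable map meeting both $D_i$ and~$N$. Let $\mathcal{Z}(B)$ denote the closed subspace of $\overline{\mathcal{M}}_{0,k}^M(B,J)$ consisting of such maps: since $T^n$ is abelian and preserves each of $D_i$ and~$N$, the subspace $\mathcal{Z}(B)$ is $T^n$-invariant.

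The hard step is to upgrade the existence of an $\eta_i$-invariant stable map in $\mathcal{Z}(B)$ to the existence of a $T^n$-invariant one, i.e., to show that if $\mathcal{Z}(B)\neq\varnothing$ then it contains a $T^n$-fixed point. Since a generalized Bott manifold~$M$ admits an integrable $T^n$-invariant complex structure and the Gromov--Witten invariants are deformation-invariant, I would take $J$ to be this integrable structure, so that $\overline{\mathcal{M}}_{0,k}^M(B,J)$ is a proper Deligne--Mumford stack with a $T^n$-action and $\mathcal{Z}(B)$ is a closed invariant substack. Borel's fixed point theorem then forces a nonempty $\mathcal{Z}(B)$ to contain a $T^n$-fixed stable map, which is precisely a $T^n$-invariant one. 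Consequently the absence of any $T^n$-invariant representative forces $\mathcal{Z}(B)=\varnothing$, so the integral vanishes, completing the proof.
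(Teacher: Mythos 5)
Your proof is correct but takes a genuinely different route from the paper's. The paper's argument goes back inside the McDuff--Tolman machinery: it picks a generic integral vector $w \in \mathbb{Z}^n$ whose sub-circle of $T^n$ has the same fixed-point set on $M$ as the whole torus, and then reruns the localization proof of McDuff--Tolman's Lemma~3.10 (via Propositions 4.10, 3.4, and Lemma~3.8 of their paper) with $w$ in place of $\eta_i$; since $w$-invariance is then the same as $T^n$-invariance, the conclusion follows directly. You instead apply their Lemma~3.10 as a black box to produce a nonempty locus $\mathcal{Z}(B)$ of $\eta_i$-invariant stable maps meeting $D_i$ and $N$, observe that $\mathcal{Z}(B)$ is a $T^n$-invariant closed subspace of a proper moduli stack for the integrable toric complex structure, and invoke Borel's fixed point theorem for the algebraic torus to extract a $T^n$-fixed stable map. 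This trades re-examining the internals of McDuff--Tolman's proof for an external algebro-geometric input (a Borel fixed point theorem for proper Deligne--Mumford stacks). Two cautions are worth flagging: replacing $N$ by a $T^n$-invariant representative in the same cohomology class changes the point-set meaning of ``intersecting~$N$,'' so strictly speaking your conclusion is about the new representative rather than the original $N$ --- this is harmless because the only submanifolds $N$ actually used (namely $\mu^{-1}(F)$ for faces $F$ of~$P$) are already $T^n$-invariant, and the paper's own argument tacitly makes the same restriction; and your use of the integrable (possibly non-regular) $J$ in McDuff--Tolman's lemma requires noting that their localization argument is formulated via the virtual class and therefore does not depend on regularity of~$J$.
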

\begin{proof}
	All the theorems we mention here are from~\cite{MT06}. Since the proof goes almost the same as the original one, we only sketch the proof of Lemma~3.10 and add some change in the toric case.
	
	The key theorem used in the proof of Lemma~3.10 is Proposition~4.10. It asserts that when a symplectic manifold has an $S^1$-action, the Gromov--Witten invariants can be computed by considering only invariant stable maps. The $S^1$-action~$\eta_i$ constructs a Hamiltonian fiber bundle~$P_{\eta_i}$ over~$S^2$ with fiber~$M$. This bundle itself is a symplectic manifold and carries a $T^2$-action: fiberwise $S^1$-action~$\eta_i$ and an additional $S^1$-action rotating the base. The classification of $T^2$-invariant holomorphic sections (more precisely, stable maps in the section class) of~$P_{\eta_i} \to S^2$ is given by Lemma~3.8: each $T^2$-invariant holomorphic section consists of the section~$\sigma_z \colon S^ 2 \to P_{\eta_i}$ with value at some fixed point~$z \in M$ of~$\eta_i$, together with $\eta_i$-invariant stable maps whose image intersect~$\sigma_z(S^2)$ and contained in the fibers~$M_0$ or~$M_{\infty}$ over $\{0,\infty\} \subset S^2$.
	
	The cohomology class~$a_i(B)$ is defined using the Gromov--Witten invariant counting holomorphic sections of~$P_{\eta_i}$. Proposition~3.4 asserts that by choosing an appropriate $S^1 \subset T^2$ action on~$P_{\eta_i}$, the $S^1$-invariant holomorphic sections coincide with the ones invariant under~$T^2$. Now the proof is done by showing that, when $\int_M a_i(B) \cup [N] \neq 0$, the point~$z$ can be chosen in~$D_i$ and the stable map contained in~$M_0$ can be chosen to intersect~$N$.
		
	In the toric case, choose $w = (w_1, \dots, w_n) \in \mathbb{Z}^n$ so that the induced circle action given by the inclusion $t \mapsto (t^{w_1}, \dots, t^{w_n})$ has the same fixed point set as $T^n$-action. For example, we may choose $w$ satisfying $\langle w, v-v' \rangle \neq 0$ for all vertices~$v$ and~$v'$ of the moment polytope~$P$. Now the $w$-invariant stable maps coincide with the $T^n$-invariant ones. All arguments in Lemma~3.10 work in the same way if we replace the $\eta_i$-action with the $w$-action.
\end{proof}

From now on we consider the case when $M$ is a generalized Bott manifold. Recall from Introduction that the moment polytope is written as follows:
\begin{equation}
	P = \{x \in \mathbb{R}^n \mid \langle x, u_{\ell}^k \rangle \leq \lambda_{\ell}^k \quad\text{ for } \ell = 1, \dots m \text{ and } k=0, \dots, n_{\ell}\}.
\end{equation}
Also recall the notations
\begin{equation}
	u(\ell):= \sum_{k=0}^{n_{\ell}} u_{\ell}^k \quad \text{and} \quad \lambda(\ell):= \sum_{k=0}^{n_{\ell}} \lambda_{\ell}^k.
\end{equation}
Real numbers $\lambda_{\ell}^k$ may change if we normalize the polytope~$P$. However, when $u(\ell) = 0$, the sum $\lambda(\ell)$ does not change. Since the expression given in Theorem~\ref{thm:main} is not changed, from now on we assume $P$ is normalized in this section. Note that $\lambda_{\ell}^k > 0$ when $P$ is normalized.

As in  Theorem~\ref{thm:MT}, the image of~$u_{\ell}^k$ by the Seidel morphism is written as
\begin{equation}
	S(u_{\ell}^k) = \sum_{B \in H_2(M; \mathbb{Z})} a_{\ell}^k(B) \otimes q^{-1+c_1(B)} t^{-\lambda_{\ell}^k + \omega(B)}.
\end{equation}
Recall that $F_{\ell}^k$ denotes a facet of~$P$. Let $D_{\ell}^k := \mu^{-1}(F_{\ell}^k)$ denote the toric divisor corresponding to~$F_{\ell}^k$. 
We need the following lemma to obtain an upper bound for the Gromov width.
\begin{lemma}\label{lem:divisible}
	Suppose $u(\ell) = 0$ and $\omega(B) \leq \lambda(\ell)$ for some~$B \in H_2(M; \mathbb{Z})$. Then the cohomology class $a_{\ell}^k(B)$ is divisible by~$[D_{\ell}^k]$ in $H^*(M; \mathbb{Q})$.
\end{lemma}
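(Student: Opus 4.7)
The plan is to translate the divisibility claim into a vanishing of certain intersection pairings and then verify it using Theorem~\ref{thm:MT}(5) combined with Lemma~\ref{lem:T-inv}. By a standard toric-geometry argument (the long exact sequence of the pair $(M, M \setminus D_\ell^k)$ together with the fact that $H_*(M \setminus D_\ell^k; \mathbb{Q})$ is spanned by classes of $T^n$-invariant cycles), the ideal $[D_\ell^k] \cdot H^*(M;\mathbb{Q})$ is characterized as the set of classes $\alpha$ with $\int_M \alpha \cup [N] = 0$ for every $T^n$-invariant subvariety $N$ disjoint from $D_\ell^k$. Such $N$ corresponds to a face $G \subset P$ whose projection to $\Delta^{n_\ell}$ is the single vertex $v_\ell^k$.

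I would first establish the combinatorial lemma that, when $u(\ell) = 0$, every edge of $P$ in the $\ell$-direction has lattice length $\lambda(\ell)$. If $p$ is a vertex of $P$ with $\ell$-label $v_\ell^k$, then $p$ lies on every facet $F_\ell^a$ for $a \neq k$, so $\langle p, u_\ell^a \rangle = \lambda_\ell^a$; summing over $a$ and using $u(\ell) = 0$ yields $\langle p, u_\ell^k \rangle = \lambda_\ell^k - \lambda(\ell)$, and the Delzant structure at $p$ then identifies the lattice length of any $\ell$-direction edge emanating from $p$ as $\lambda_\ell^k - \langle p, u_\ell^k \rangle = \lambda(\ell)$. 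Consequently every $T^n$-invariant $\mathbb{P}^1$ in the $\ell$-direction has symplectic area $\lambda(\ell)$.

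Now suppose $u \colon \Sigma \to M$ is a $T^n$-invariant $J$-holomorphic stable map in class $B$ whose image meets both $D_\ell^k$ and $N$. Its moment image is a connected union of edges of $P$ containing a vertex on $F_\ell^k$ and a vertex with $\ell$-label $v_\ell^k$, so the $\ell$-th projection of the image contains two distinct vertices of $\Delta^{n_\ell}$ and hence an $\ell$-direction edge. By the combinatorial lemma the associated sphere component contributes area $\lambda(\ell)$; together with $\omega(B) \leq \lambda(\ell)$ and the positivity of the remaining components' areas, this forces $u$ to be a single simple cover of an $\ell$-direction sphere, so $B$ equals the $\ell$-fiber class $[C]$. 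A short toric-intersection computation using $u(\ell) = 0$ and the relation $\sum_i ([D_i] \cdot [C]) \, u_i = 0$ yields $c_1([C]) = n_\ell + 1$, whence $a_\ell^k([C]) \in H^{2 - 2(n_\ell+1)}(M;\mathbb{Q}) = H^{-2n_\ell}(M;\mathbb{Q}) = 0$ and is trivially divisible by $[D_\ell^k]$. For every other class $B$ with $\omega(B) \leq \lambda(\ell)$, no such invariant stable map exists, and Theorem~\ref{thm:MT}(5) together with Lemma~\ref{lem:T-inv} delivers the required vanishing $\int_M a_\ell^k(B) \cup [N] = 0$.

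The main obstacle I anticipate is making the Poincar\'e--Lefschetz reduction to pairings with $T^n$-invariant cycles disjoint from $D_\ell^k$ fully precise; this is a standard feature of smooth projective toric varieties, but requires careful bookkeeping of the relationship between $[D_\ell^k] \cdot H^*(M;\mathbb{Q})$ and the topology of the open complement $M \setminus D_\ell^k$.
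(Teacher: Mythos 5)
Your proof is correct and follows essentially the same route as the paper's: reduce divisibility by $[D_\ell^k]$ to the vanishing of intersection pairings against classes of toric subvarieties disjoint from $D_\ell^k$, invoke Theorem~\ref{thm:MT}(5) together with Lemma~\ref{lem:T-inv}, and observe that any $T^n$-invariant stable map in class $B$ meeting both $D_\ell^k$ and such a disjoint subvariety must contain an $\ell$-direction edge of symplectic area $\lambda(\ell)$, forcing $B$ to be the fiber class and then killing $a_\ell^k(B)$ by a degree count.

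The differences are cosmetic. Where the paper computes $\omega([E]) = \lambda(\ell)$ and $c_1([E]) = n_\ell + 1$ via the intersection numbers $[E]\cdot[D_p^q]$ in~\eqref{eq:intersection}, you instead derive the edge length directly from the vertex coordinates $\langle p, u_\ell^k\rangle = \lambda_\ell^k - \lambda(\ell)$, which is a clean and equally valid computation. And where the paper runs a single contradiction argument, you split into the case $B = [C]$ (where $a_\ell^k([C])$ has negative degree, hence vanishes and is trivially in the ideal) and the remaining cases (where no invariant stable map exists at all). Finally, you correctly flag that the characterization of the ideal $[D_\ell^k]\cdot H^*(M;\mathbb{Q})$ in terms of vanishing pairings with invariant cycles disjoint from $D_\ell^k$ needs justification via Poincar\'e--Lefschetz duality or the Stanley--Reisner description; the paper invokes the same reduction with comparable brevity, relying on the observation (special to products of simplices) that any face disjoint from $F_\ell^k$ lies inside $F(k) = \bigcap_{i\neq k} F_\ell^i$. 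So no genuine gap: both proofs implicitly appeal to the same standard toric fact there.
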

\begin{proof}
	It is known that the cohomology ring of a toric manifold is generated as a ring by toric divisor classes corresponding to facets of~$P$. Products of distinct toric divisor classes correspond to intersections of facets of~$P$, and they generate the cohomology ring as an abelian group.\footnote{See for example \cite[Section~5]{Ful93}.} So in the following of the proof, given a face~$F$ of~$P$, we let $[F]$ denote the corresponding cohomology class. In particular, $[F_{\ell}^k] = [D_{\ell}^k]$. In the case of generalized Bott manifolds, any face disjoint from~$F_{\ell}^k$ is contained in the face $F(k):= \cap_{i \neq k} F_{\ell}^i$. Hence, the proof is complete if we show $\int_M a_{\ell}^k(B) \cup [F] = 0$ for all faces~$F$ contained in~$F(k)$.
		
	 Suppose not. Then by Theorem~\ref{thm:MT} and Lemma~\ref{lem:T-inv}, the homology class~$B$ is represented by a $T^n$-invariant $J$-holomorphic stable map $u \colon \Sigma \to M$ intersecting both~$D_{\ell}^k$ and~$D(k):= \cap_{i \neq k} D_{\ell}^i$. Since the image of~$u$ is $T^n$-invariant, the moment map image $\mu(u(\Sigma))$ is contained in the $1$-skeleton of~$P$. The image is also connected, so there is an edge~$E$ of~$P$ contained in $\mu(u(\Sigma))$ connecting $F_{\ell}^k$ and~$F(k)$.
	 
	 We compute the intersection number $[E] \cdot [D_p^q]$ as follows. Recall from~\eqref{eq:vertex} that the vertex $E \cap F(k)$ can be identified with the sequence~$(s_1, \dots, s_m)$ with $0 \leq s_j \leq n_j$. Note from the definition of~$F(k)$ that $s_{\ell} = k$. Since the edge~$E$ connects~$F_{\ell}^k$ and~$E \cap F(k)$, there exists~$k' \neq k$ such that
	\begin{equation}\label{eq:edge}
	 	E =  \left(\bigcap_{i \neq k, k'} F_{\ell}^i \right) \cap \left(\bigcap_{j \neq \ell} \bigcap_{i \neq s_j} F_j^i \right).
	\end{equation}
	We claim that
	\begin{equation}\label{eq:intersection}
		[E] \cdot [D_p^q] =
		\begin{cases}
			1,& \text{ if }p=\ell \text{ and}\\
			0,& \text{ otherwise}.
		\end{cases}
	\end{equation}
	From the expression~\eqref{eq:edge}, the cohomology class~$[E]$ has $n_{\ell}-1$ factors of~$[D_{\ell}^i]$ and $n_j$ factors of~$[D_j^i]$ for~$j \neq \ell$. If the factors in~\eqref{eq:intersection} are all distinct, the claim follows since the intersection number comes from the intersection of the corresponding facets. In the case when $[E]$ has $[D_p^q]$~factor, we use linear relations of divisors to remove the multiplicity. Explicitly,
	\begin{equation}\label{eq:linear_relation}
		[D_p^0] = [D_p^i] + \sum_{j=1}^{p-1} \aaa{i}{p}{j} [D_j^0] \quad \text{for } i = 1, \dots, n_p.
	\end{equation}
	Then for any~$p$ and~$q$, we can write as
	\[
		[D_p^q] = [D_p^{s_p}] + \sum_{j=1}^{p-1} c_j [D_j^0], \quad \text{where } c_j = \aaa{s_p}{p}{j} - \aaa{q}{p}{j} \text{ with convention } \aaa{0}{p}{j}=0.
	\]
	By the expression~\eqref{eq:edge} and the fact that $\cap_{i=0}^{n_j} F_j^i$ is empty, the term~$[D_p^{s_p}]$ contributes to the intersection number in~\eqref{eq:intersection} if and only if $p = \ell$, and the contribution is one. In the remaining terms~$c_j[D_j^0]$ with~$j < p$, the case~$j = \ell$ does not appear since the assumption $u(\ell) = 0$ implies that $\aaa{q}{p}{\ell} = 0$ for all~$p$ and~$q$. Whenever the multiplicity occurs with~$[D_j^0]$, that is $s_j \neq 0$, we apply the relation~\eqref{eq:linear_relation} for $p=j$ and $i=s_j$. Repeating this procedure, all the multiplicities are removed without creating factors of the form~$[D_{\ell}^i]$, and the contributions are all zero by~\eqref{eq:edge}.

	Since $[\omega] = \sum \lambda_{\ell}^k [D_{\ell}^k]$ and $c_1(M) = \sum [D_{\ell}^k]$,\footnote{See Equation~(1.6) in \cite{Gui94}, where a different convention on the sign and the normalization was used.} it follows from~\eqref{eq:intersection} that
	\[
		\omega([E]) = \lambda(\ell) \quad \text{and} \quad c_1([E]) = n_{\ell}+1.
	\]
	Since $J$-holomorphic curves have non-negative symplectic areas, we conclude that $B = [E]$ by the assumption $\omega(B) \leq \lambda(\ell)$. But then the degree of~$a_{\ell}^k(B)$ is negative since $\deg a^k_{\ell}(B) + 2(-1 + c_1(B)) =0$, which yields a contradiction.
\end{proof}

\begin{proposition}\label{prop:upper_bound}
	If $u(\ell) = 0$, then $w_G(M) \leq \lambda(\ell)$.
\end{proposition}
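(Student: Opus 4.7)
The plan is to extract a nonvanishing Gromov--Witten invariant with a point insertion from the Seidel identity and then conclude via Theorem~\ref{thm:Gromov}. Each vector $u_\ell^k \in \mathbb{Z}^n$ generates a Hamiltonian sub-circle of~$T^n$, and under $\pi_1(T^n) = \mathbb{Z}^n \to \pi_1(\Ham(M,\omega))$ the addition of vectors corresponds to the concatenation of loops. Since $S$ is a group homomorphism, the hypothesis $u(\ell) = \sum_{k=0}^{n_\ell} u_\ell^k = 0$ therefore gives
\[
S(u_\ell^0) * S(u_\ell^1) * \cdots * S(u_\ell^{n_\ell}) = 1 \quad \text{in } QH^0(M;\Lambda)^{\times}.
\]

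The next step is to extract a scalar identity. Every $S(u_\ell^k)$ sits in~$QH^0$, so the coefficient of $q^0 t^0$ on the left is a class in $H^0(M;\mathbb{Q}) = \mathbb{Q}\cdot[M]$ that must equal~$[M]$. Expanding the product with Theorem~\ref{thm:MT}, pairing with~$[pt]$ via $\int_M$, and translating iterated quantum products into Gromov--Witten invariants through~\eqref{eq:quantum_iterated_product} produces
\[
\sum_{(B_0, \dots, B_{n_\ell}, A)} \GW^M_{A, n_\ell+2}\bigl(a_\ell^0(B_0), \dots, a_\ell^{n_\ell}(B_{n_\ell}), [pt];\, [pt]\bigr) = 1,
\]
summed over tuples in $H_2(M;\mathbb{Z})^{n_\ell+2}$ satisfying $\sum_k c_1(B_k) + c_1(A) = n_\ell+1$ and $\sum_k \omega(B_k) + \omega(A) = \lambda(\ell)$. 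By Theorem~\ref{thm:MT}(2), every contributing $B_k$ satisfies $0 \leq \omega(B_k) \leq \lambda(\ell)$.

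The heart of the argument is to split this sum according to whether $A = 0$ or $A \neq 0$ and to show that the $A = 0$ part vanishes. For $A = 0$ the iterated quantum product collapses to a classical cup product by~\eqref{eq:quantum_product_A=0}, and because $\omega(B_k) \leq \lambda(\ell)$ for every~$k$, Lemma~\ref{lem:divisible} makes each $a_\ell^k(B_k)$ divisible by~$[D_\ell^k]$. Hence the entire product is divisible by $[D_\ell^0] \cup \cdots \cup [D_\ell^{n_\ell}]$, which corresponds to the intersection of all $n_\ell+1$ facets of the $\ell$th simplex factor of~$P$; that intersection is empty, so the $A = 0$ contribution is zero.

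Consequently the $A \neq 0$ terms sum to~$1$, so at least one tuple yields a nonzero invariant $\GW^M_{A, n_\ell+2}(\dots; [pt])$ with $A \neq 0$ and $\omega(A) = \lambda(\ell) - \sum_k \omega(B_k) \leq \lambda(\ell)$. Theorem~\ref{thm:Gromov} then delivers $w_G(M) \leq \omega(A) \leq \lambda(\ell)$. The step most likely to require care is the vanishing of the $A = 0$ contribution, since it simultaneously relies on the divisibility provided by Lemma~\ref{lem:divisible} and the combinatorial fact that all facets of a simplex have empty common intersection; everything else amounts to book-keeping the bigrading in the Seidel identity.
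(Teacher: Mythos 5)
Your proof is correct and follows essentially the same route as the paper's: equate $\prod_k S(u_\ell^k)$ with the unit using $u(\ell)=0$, use the bigrading in $q$ and $t$ (together with Theorem~\ref{thm:MT}(2) and $\omega(A) \ge 0$) to force $\omega(B_k) \le \lambda(\ell)$, invoke Lemma~\ref{lem:divisible} to kill the $A=0$ term because $\prod_k [D_\ell^k]=0$, and then feed the surviving nonzero Gromov--Witten invariant with a point insertion into Theorem~\ref{thm:Gromov}. The only cosmetic difference is that you explicitly extract the $q^0t^0$ coefficient and pair with $[pt]$ to turn the Seidel identity into a scalar identity $\sum \mathrm{GW} = 1$ before splitting off the $A=0$ part, whereas the paper argues directly that some nonzero summand must exist and then rules out $A'=0$ for that summand; the content is the same.
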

\begin{proof}
	Since $u(\ell) = 0$ and $S$ is a group homomorphism,
	\begin{align*}
		1 &= \prod_{k=0}^{n_{\ell}} S(u_{\ell}^k)
		= \prod_{k=0}^{n_{\ell}} \left(\sum_{B_k \in H_2(M; \mathbb{Z})} a_{\ell}^k(B_k) \otimes q^{-1 + c_1(B_k)} t^{-\lambda_{\ell}^k + \omega(B_k)}\right) \\
		&= \sum_{A, B_k \in H_2(M; \mathbb{Z})} \left(a_{\ell}^0(B_0) * \dots * a_{\ell}^{n_{\ell}}(B_{n_{\ell}})\right)_A \\
		&\qquad \qquad \qquad \qquad \otimes q^{-(n_{\ell}+1) + c_1(A + \sum_{k=0}^{n_{\ell}} B_k)} t^{-\lambda(\ell) + \omega(A + \sum_{k=0}^{n_{\ell}}B_k)}.
	\end{align*}
	So there exist $A', B_{k}' \in H_2(M; \mathbb{Z})$ for $k=0, \dots, n_{\ell}$ such that
	\begin{align}\label{eq:upper_bound}
		\begin{cases}
		\left(a_{\ell}^0(B_{0}') * \dots * a_{\ell}^{n_{\ell}}(B_{n_{\ell}}')\right)_{A'} &\neq 0, \\
		c_1\left(A' + \sum_{k=0}^{n_{\ell}} B_k'\right) &= n_{\ell}+1, \\
		\omega \left(A' + \sum_{k=0}^{n_{\ell}} B_k'\right) &= \lambda(\ell).
		\end{cases}
	\end{align}
	We claim that $A' \neq 0$. Recall from the equation~\eqref{eq:quantum_iterated_product} that the homology class~$A'$ is represented by a $J$-holomorphic stable map, so $\omega(A') \geq 0$. We also have $\omega(B_k') \geq 0$ for all~$k$ by Theorem~\ref{thm:MT} and the first equation of~\eqref{eq:upper_bound}. Now $\omega(B_k') \leq \lambda(\ell)$ for all~$k$ by  the third equation of~\eqref{eq:upper_bound}, so the product $a_{\ell}^0(B_0') \cup \dots \cup a_{\ell}^{n_{\ell}}(B_{n_{\ell}}')$ is zero since it is a multiple of $\prod_{k=0}^{n_{\ell}} [D_{\ell}^k] = 0$ by Lemma~\ref{lem:divisible}. We thus have $A' \neq 0$ from~\eqref{eq:quantum_product_A=0}. Since
	\[
		\deg \left(a_{\ell}^0(B_0') * \dots * a_{\ell}^{n_{\ell}}(B_{n_{\ell}}')\right)_{A'}
		= \sum_{k=0}^{n_{\ell}} \left(2 - c_1(B_k')\right) - 2c_1(A') = 0,
	\]
	we have
	\[
	\begin{split}
		&\GW_{A',n_{\ell}+2}^{M} \left(a_{\ell}^0(B_0'), \dots, a_{\ell}^{n_{\ell}}(B_{n_{\ell}}'), [pt]; [pt]\right)  \\
		&\qquad = \int_M \left(a_{\ell}^0(B_0') * \dots * a_{\ell}^{n_{\ell}}(B_{n_{\ell}}')\right)_{A'} \cup [pt] \neq 0.
	\end{split}
	\]
	It follows that $w_G(M) \leq \omega(A') \leq \lambda(\ell)$ by Theorem~\ref{thm:Gromov} and the equation~\eqref{eq:upper_bound}.
\end{proof}

\section{Examples}\label{sec:Examples}
In this section we discuss the computation of the Gromov width of  more general symplectic toric manifolds by providing some examples. The estimate given in Theorem~\ref{thm:Lu} seems to be a good candidate in the Fano case. We list some questions at the end of the section.

Let $(M, \omega)$ be a symplectic toric manifold. Recall the following expression of the moment polytope:
\begin{equation}
	P = \{ x \in \mathbb{R}^n \mid \langle x, \eta_i \rangle \leq \kappa_i \text{ for } i=1, \dots, N\}.
\end{equation}
We write $\eta_i$ as the $i$th column of $n \times N$ matrix in the following examples. The corresponding toric divisor is denoted by~$D_i$. We can vary the symplectic structure~$\omega$ by choosing different values of~$\kappa_i$. The choice of~$\kappa_i$ gives a symplectic form on~$M$ whenever no inequalities are redundant for defining~$P$.

The following example is from~\cite{Lu06}, which is obtained by resolving the singularity of the weighted projective space~$\mathbb{P}^4(1,1,2,2,2)$.
\begin{example}[{\cite[Example~4.1]{Lu06}}]\label{ex:Lu}
	Consider the symplectic toric manifold determined by the data
	\[
		[\eta_1 \cdots \eta_6] = 
		\begin{bmatrix}
			1 & 0 & 0 & 0 & -1 & 0\\
			0 & 1 & 0 & 0 & -2 & -1\\
			0 & 0 & 1 & 0 & -2 & -1\\
			0 & 0 & 0 & 1 & -2 & -1
		\end{bmatrix},
	\]
	$\kappa_2 = \kappa_5 = 1$ and $\kappa_i = 0$  for $i=1,3,4,6$. This is a generalized Bott manifold whose moment polytope is combinatorially equivalent to $\Delta^1 \times \Delta^3$.
	By Theorem~\ref{thm:main}, the Gromov width is equal to
	\[
		\kappa_2 + \kappa_3 + \kappa_4 + \kappa_6 =1.
	\]
	This coincides with the computation in~\cite{Lu06}.
\end{example}

When the symplectic toric manifold is Fano, we have the following theorem by Lu estimating the Gromov width from above. This estimate coincides with the value given in Theorem~\ref{thm:main}.

\begin{theorem}[\cite{Lu06} Theorem~1.2]\label{thm:Lu}
	Let $M$ be a Fano toric manifold equipped with the symplectic form~$\omega$ determined by the polytope~$P$. Suppose that $\sum_{i=1}^N a_i \eta_i = 0$ for some non-negative integers~$a_i$. Then
	\[
		w_G(M, \omega) \leq \sum_{i=1}^N a_i \kappa_i.
	\]
\end{theorem}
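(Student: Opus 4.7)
The plan is to mirror the argument of Proposition~\ref{prop:upper_bound}, now simplified by the Fano hypothesis. Under this hypothesis, Theorem~\ref{thm:MT}(4) collapses the Seidel image to a single term:
\[
S(\eta_i) = [D_i] \otimes q^{-1}\, t^{-\kappa_i},
\]
so the delicate combinatorial ingredient Lemma~\ref{lem:divisible} is not needed, and the product of the $S(\eta_i)$'s becomes a clean iterated quantum product of divisor classes.

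Since $\sum_{i=1}^N a_i \eta_i = 0$ in $\mathbb{Z}^n \cong \pi_1(T^n)$, the corresponding concatenation of Hamiltonian loops is trivial in $\pi_1(\Ham(M,\omega))$, and so $\prod_{i=1}^N S(\eta_i)^{*a_i} = 1 \in QH^0(M;\Lambda)^{\times}$. Writing $a := \sum_i a_i$ and expanding the iterated quantum product gives
\[
1 = \sum_{A \in H_2(M;\mathbb{Z})} \bigl([D_1]^{*a_1} * \cdots * [D_N]^{*a_N}\bigr)_A \otimes q^{-a + c_1(A)}\, t^{-\sum_i a_i \kappa_i + \omega(A)}.
\]
Extracting the $q^0 t^0$ coefficient produces some $A' \in H_2(M;\mathbb{Z})$ with $c_1(A')=a$, $\omega(A') = \sum_i a_i \kappa_i$, and
\[
\bigl([D_1]^{*a_1} * \cdots * [D_N]^{*a_N}\bigr)_{A'} = 1 \in H^0(M;\mathbb{Q}).
\]
Through the defining relation~\eqref{eq:quantum_iterated_product}, pairing with $[pt]$ yields
\[
\GW^M_{A',\,a+1}\bigl(\underbrace{[D_1],\ldots,[D_1]}_{a_1 \text{ times}},\ldots,\underbrace{[D_N],\ldots,[D_N]}_{a_N \text{ times}},[pt];[pt]\bigr) = 1 \neq 0.
\]

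We may assume $a > 0$, as the bound is otherwise trivial. Then $c_1(A')=a > 0$ forces $A' \neq 0$, and Theorem~\ref{thm:Gromov}, applied after permuting insertions to place $[pt]$ in the first slot, gives $w_G(M,\omega) \leq \omega(A') = \sum_i a_i \kappa_i$. The only step that requires genuine care is verifying $A' \neq 0$: in the non-Fano setting of Proposition~\ref{prop:upper_bound} this was the heart of the matter and needed Lemma~\ref{lem:divisible} to kill the $A=0$ contribution, whereas here the Chern-number bookkeeping alone suffices, which is precisely why Lu's estimate works smoothly in the Fano regime.
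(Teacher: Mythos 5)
Your proposal follows the same route as the paper's own (very brief) proof, which simply observes that the Fano hypothesis collapses the Seidel element to $S(\eta_i) = [D_i]\otimes q^{-1}t^{-\kappa_i}$ and then refers to the argument of Proposition~\ref{prop:upper_bound}; you correctly unpack that reduction. Two small points are worth tightening. First, Theorem~\ref{thm:MT} requires the moment polytope to be \emph{normalized}, so strictly one must also check that normalizing does not change the claimed bound; the paper does this explicitly by noting that the hypothesis $\sum_i a_i\eta_i = 0$ makes $\sum_i a_i\kappa_i$ translation-invariant. Second, when you extract the $q^0t^0$ part of the expansion, what you learn is that $\sum_{A}\bigl([D_1]^{*a_1}*\cdots*[D_N]^{*a_N}\bigr)_{A} = 1$ where the sum runs over all $A$ with $c_1(A)=a$ and $\omega(A)=\sum_i a_i\kappa_i$, so the individual coefficient $\bigl(\cdots\bigr)_{A'}$ is only guaranteed to be nonzero, not equal to $1$; since it lies in $H^0(M;\mathbb{Q})\cong\mathbb{Q}$, nonvanishing is all you need and the rest of your argument goes through unchanged. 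Finally, your observation that $c_1(A')=a>0$ alone rules out $A'=0$ is a clean simplification of the cup-product vanishing used in Proposition~\ref{prop:upper_bound}, available here precisely because the Fano condition forces $B_k=0$; either route is fine.
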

\begin{proof}
	The sum $\sum_{i=1}^N a_i \kappa_i$, which is the pairing between~$[\omega]$ and the homology class represented by the integers~$a_i$, does not change when we normalize the moment polytope~$P$. Since $S(\eta_i) = [D_i] \otimes q^{-1}t^{-\kappa_i}$ by Theorem~\ref{thm:MT} and $S$ is a group homomorphism, the proof follows as in Proposition~\ref{prop:upper_bound}.
\end{proof}

This bound may not be sharp if we drop the Fano condition, as we see in the following example.
\begin{example}\label{ex:toric_surface}
	Consider the symplectic toric manifold~$(M, \omega)$ determined by the following data
	\begin{align*}
	[\eta_1 \cdots \eta_9] = 
		&\begin{bmatrix}
			1 & 1 & 0 & -1 & -2 & -1 & 0 & 1 & 1\\
			0 & 1 & 1 & 1 & 1 & 0 & -1 & -2 & -1
		\end{bmatrix},\\
	[\kappa_1 \cdots \kappa_9] = 
		&\begin{bmatrix}
			6 & 7 & 6 & \phantom{-}6 & \phantom{-}7 & \phantom{-}6 & \phantom{-}6 & \phantom{-}7 & \phantom{-}6
		\end{bmatrix}.
	\end{align*}
	The moment polytope~$P$ is displayed in Figure~\ref{fig:toric_surface}. The upper bound given by Theorem~\ref{thm:Lu} is~$12$. On the other hand, the moment polytope~$P$ has area~$141 / 2$. Since 
\[
	\frac{w_G(M)^2}{2} \leq \mathrm{Vol}\; M = \mathrm{Area}\; P < 72,
\]
 the Gromov width should be less than~$12$.
	
	Indeed, we can show that the Gromov width is equal to~$21 / 2$ using the algorithm from~\cite[Section~6]{KK17}. See also \cite[Section~13.4]{MS17} and the references therein. Our underlying manifold~$M$ is diffeomorphic to $\mathbb{P}^2 \# 6\overline{\mathbb{P}^2}$, the space obtained by blowing up at $6$~points of~$\mathbb{P}^2$. Following the terminology of~\cite{KK17}, the vector $(18; 6,6,6,5,5,5)$ encodes the cohomology class~$[\omega]$ of the blowup form. Let $E$ denote the cohomology class of the exceptional divisor of the one-point blowup $\widetilde{M} \to M$. The problem of finding a symplectic embedding of the ball~$B^4(r)$ of radius~$r$ into~$M$ is reduced to finding a blowup form~$\widetilde{\omega}$ on~$\widetilde{M}$ such that $[\widetilde{\omega}] = [\omega] - \pi r^2 E$. By applying the standard Cremona move repeatedly, we can check that the vector $(18; 6,6,6,5,5,5, \pi r^2)$ transforms to a reduced vector with positive entries if and only if $\pi r^2 < 21/2$.
\end{example}
\begin{figure}
	\begin{tikzpicture}[scale = 0.2]
	\coordinate (v12) at (6,1);
	\coordinate (v23) at (1,6);
	\coordinate (v34) at (0,6);
	\coordinate (v45) at (-1,5);
	\coordinate (v56) at (-6,-5);
	\coordinate (v67) at (-6,-6);
	\coordinate (v78) at (-5,-6);
	\coordinate (v89) at (5,-1);
	\coordinate (v91) at (6,0);
	
	\draw[help lines] (-6,-6) grid (6,6);
	\draw[thick] (v12) -- (v23) -- (v34) -- (v45) -- (v56) -- (v67) -- (v78) -- (v89) -- (v91) -- cycle;
	
	\foreach \x in {12, 23, 34, 45, 56, 67, 78, 89, 91}
	\draw[fill=black] (v\x) circle (5.5pt);
	
	\end{tikzpicture}
	\caption{The moment polytope for Example~\ref{ex:toric_surface}.}
	\label{fig:toric_surface}
\end{figure}
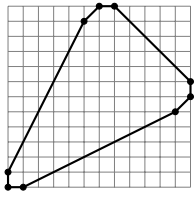

There are $5$ diffeomorphism types of Fano toric manifolds of dimension~$4$: $\mathbb{P}^1 \times \mathbb{P}^1$ and $\mathbb{P}^2 \# k\overline{\mathbb{P}^2}$ for $k = 0,1,2,3$. It is straightforward to check that the Gromov width of these manifolds are exactly the values given in Theorem~\ref{thm:Lu}. However, we do not know whether this still holds in higher dimensions.

\begin{example}\label{ex:Fano_bundle}
	Consider the toric manifold~$M$ with ray vectors given by
	\[
		[\eta_1 \cdots \eta_8] =
		\begin{bmatrix}
		1 & -1 & 0 & 0 & 0 & 0 & 0 & 0\\
		0 & 0 & 1 & 0 & -1 & -1 & 0 & 1\\
		1 & 0 & 0 & 1 & 1 & 0 & -1 & -1
		\end{bmatrix},
	\]
	and maximal cones determined by the polytope~$P$ when $\kappa_i = 1$ for all~$i$. Maximal cones are determined by the primitive collections\footnote{See \cite{Bat91} or \cite[Section~6.4]{CLS11} for the definition of primitive collections and primitive relations.}, explicitly given as
	\[
		\{1,2\}, \{3,5\}, \{3,6\}, \{3,7\}, \{4,6\}, \{4,7\}, \{4,8\}, \{5,7\}, \{5,8\}, \{6,8\}.
	\]
	This toric manifold is a fiber bundle over $\mathbb{P}^1$.	The fiber $\mathbb{P}^2 \# 3\overline{\mathbb{P}^2}$ is Fano but not a generalized Bott manifold. The total space itself is Fano as we can check by looking at primitive relations.
	
	First consider the case when the symplectic form~$\omega$ is given by setting ${\kappa_i = 1}$ for all~$i$.\footnote{Such symplectic manifold, that is, the one with $[\omega] = c_1(M)$, is called monotone.} Then since $\eta_3 + \eta_6 = 0$, by Theorem~\ref{thm:Lu} we have ${w_G(M, \omega) \leq 2}$. As we see in the first picture in Figure~\ref{fig:Fano_bundle}, we can embed a distorted cross-polytope~$\Diamond^3(2)$ into~$P$. By Proposition~\ref{prop:embedding} we conclude that
	\begin{equation}
		w_G(M, \omega) = 2.
	\end{equation}
		
	Now consider the case when the symplectic form~$\omega_{\epsilon}$ is given by setting $\kappa_1 = \epsilon$ with $0 < \epsilon < 1$ and $\kappa_i = 1$ for $i \neq 1$. Then the upper bound remains the same, while we cannot embed a distorted cross-polytope whose length is larger than $(5 + \epsilon) / 3$. To see this, let 
	\[
		\Diamond^3(\rho) := \mathrm{Conv} (L_1, L_2, L_3) \subset P
	\]
	be the convex hull of line segments $L_1, L_2, L_3$ of length~$\rho$ intersecting at some point~$p = (p_x, p_y, p_z)$. Let $\pi \colon \mathbb{R}^3 \to \mathbb{R}^2$ be the projection to the $yz$-plane. Note that $\pi$ does not decrease the length of line segments that are not parallel to the $x$-axis. 
	
	First consider the case when there exists at least one line segment, say~$L_1$, which is not orthogonal to either~$\eta_1$ or~$\eta_2$. Then $L_1$ has maximal length when it is parallel to the $x$-axis. By looking at the defining equations, we have
	\begin{equation}
		 \rho = \mathrm{length} \; L_1 \leq 1 + \epsilon - p_z.
	\end{equation}
	On the other hand,  as we see in the last picture of Figure~\ref{fig:Fano_bundle},
	\begin{equation}
		\rho 	\leq \min \{\mathrm{length} \; \pi(L_2), \mathrm{length} \; \pi(L_3) \} \leq 2 - |p_z| \leq 2 + p_z.
	\end{equation}
	Combining two inequalities 	we have $\rho \leq (3 + \epsilon) /2 < (5 + \epsilon) /3$.

	Now suppose that all $L_i$ are orthogonal to~$\eta_1$ or~$\eta_2$. Considering the case when each $L_i$ has maximal length, we obtain the following inequalities.
	\begin{align*}
		\begin{cases}
			\mathrm{length} \; L_1 \leq 2 + p_z + p_x \quad &\text{when } \mathrm{slope} \; L_1 = (1,0,-1),\\
			\mathrm{length} \; L_2 \leq 2 - p_z \quad &\text{when } \mathrm{slope} \; L_2 = (0,1,0),\\
			\mathrm{length} \; L_3 \leq 1 + \epsilon - p_x  \quad &\text{when } \mathrm{slope} \; L_3 = (0,0,1).
		\end{cases}
	\end{align*}
	Combining these inequalities we have $\rho \leq (5+ \epsilon ) / 3$. It follows that
	\begin{equation}
		\frac{5+ \epsilon}{3} \leq w_G(M, \omega_{\epsilon}) \leq 2.
	\end{equation}
\end{example}
\begin{figure}
	\begin{subfigure}[t]{0.5\textwidth}
		\centering
		\begin{tikzpicture}[scale = 1]
		\begin{scope}[color=gray!50, thin]
		\foreach \xi in {-1,0,1,2,3} { \draw (\xi, -1,2) -- (\xi, -1,-1) -- (\xi, 2, -1); }%
		\foreach \yi in {-1,0,1,2} {\draw (-1,\yi,2) -- (-1,\yi,-1) -- (3,\yi,-1);}%
		\foreach \zi in {-1,0,1,2} {\draw (-1,2,\zi) -- (-1,-1,\zi) -- (3,-1,\zi);}%
		\end{scope}
		
		\foreach \xi in {-1,0,1,2,3} {\draw (\xi,-1,2) -- (\xi, -1.1, 2) node[anchor=north] {\tiny{\xi}};}
		\draw (1,-1.4,2) node[anchor=north] {\small{$x$}};
		\foreach \yi in {-1,0,1,2} {\draw (-1,\yi,2) -- (-1.1,\yi, 2) node[anchor=east] {\tiny{\yi}};}
		\draw (-1.4,0.5,2) node[anchor=east] {\small{$y$}};
		\foreach \zi in {-1,0,1,2} {\draw (-1,2,\zi) -- (-1,2.1,\zi) node[anchor=south] {\tiny{\zi}};}
		\draw (-1, 2.4, 0.7) node[anchor=south] {\small{$z$}};
		
		\def\epsilon{1};
		\coordinate (v134) at (\epsilon - 1, 1, 1);
		\coordinate (v145) at (\epsilon - 1, 0, 1);
		\coordinate (v156) at (\epsilon, -1, 0);
		\coordinate (v167) at (\epsilon + 1, -1, -1);
		\coordinate (v178) at (\epsilon + 1, 0, -1);
		\coordinate (v183) at (\epsilon, 1, 0);
		\coordinate (v234) at (-1, 1, 1);
		\coordinate (v245) at (-1, 0, 1);
		\coordinate (v256) at (-1, -1, 0);
		\coordinate (v267) at (-1, -1, -1);
		\coordinate (v278) at (-1, 0, -1);
		\coordinate (v283) at (-1, 1, 0);
		
		\draw[color = blue] (-1,0,0) -- (1,0,0);
		\draw[color = blue] (-1,-1,0) -- (-1,1,0);
		\draw[color = blue] (-1,0,-1) -- (-1,0,1);
		
		\draw[thick, dashed] (v283)--(v278)--(v256);
		\draw[thick, dashed] (v278) -- (1,0,0);			
		\draw[thick, fill opacity = 0.6, fill=yellow]
		(v283) -- (v245) -- (1,0,0) -- cycle;
		\draw[thick, fill opacity = 0.6, fill=blue!50!green]
		(v245) -- (v256) -- (1,0,0) -- cycle;

		\draw (v283) -- (v234) -- (v245) -- (v256);
		\draw[dotted] (v283) -- (v278) -- (v267) -- (v256);
		\draw (v134) -- (v145) -- (v156) -- (v167) -- (v178) -- (v183) -- cycle;
		\draw (v134) -- (v234);
		\draw (v145) -- (v245);
		\draw (v156) -- (v256);
		\draw[dotted] (v167) -- (v267);
		\draw[dotted] (v178) -- (v278);
		\draw (v183) -- (v283);
		\draw[fill=black] (-1,0,0) circle (2pt);
		\end{tikzpicture}
		\caption{$\kappa_1 = 1$}
	\end{subfigure}
	~
	\begin{subfigure}[t]{0.5\textwidth}
		
		\centering
		\begin{tikzpicture}[scale = 1]
		\begin{scope}[color=gray!50, thin]
		\foreach \xi in {-1,0,1,2,3} { \draw (\xi, -1,2) -- (\xi, -1,-1) -- (\xi, 2, -1); }%
		\foreach \yi in {-1,0,1,2} {\draw (-1,\yi,2) -- (-1,\yi,-1) -- (3,\yi,-1);}%
		\foreach \zi in {-1,0,1,2} {\draw (-1,2,\zi) -- (-1,-1,\zi) -- (3,-1,\zi);}%
		\end{scope}
		
		\foreach \xi in {-1,0,1,2,3} {\draw (\xi,-1,2) -- (\xi, -1.1, 2) node[anchor=north] {\tiny{\xi}};}
		\draw (1,-1.4,2) node[anchor=north] {\small{$x$}};
		\foreach \yi in {-1,0,1,2} {\draw (-1,\yi,2) -- (-1.1,\yi, 2) node[anchor=east] {\tiny{\yi}};}
		\draw (-1.4,0.5,2) node[anchor=east] {\small{$y$}};
		\foreach \zi in {-1,0,1,2} {\draw (-1,2,\zi) -- (-1,2.1,\zi) node[anchor=south] {\tiny{\zi}};}
		\draw (-1, 2.4, 0.7) node[anchor=south] {\small{$z$}};
		
		\def\epsilon{0.6};
		\def\rho{(5 + \epsilon)/3};
		\def\px{(\epsilon - 1)*2/3};
		\def\pz{(1 - \epsilon)/3};
		\coordinate (v134) at (\epsilon - 1, 1, 1);
		\coordinate (v145) at (\epsilon - 1, 0, 1);
		\coordinate (v156) at (\epsilon, -1, 0);
		\coordinate (v167) at (\epsilon + 1, -1, -1);
		\coordinate (v178) at (\epsilon + 1, 0, -1);
		\coordinate (v183) at (\epsilon, 1, 0);
		\coordinate (v234) at (-1, 1, 1);
		\coordinate (v245) at (-1, 0, 1);
		\coordinate (v256) at (-1, -1, 0);
		\coordinate (v267) at (-1, -1, -1);
		\coordinate (v278) at (-1, 0, -1);
		\coordinate (v283) at (-1, 1, 0);
		\coordinate (p) at ({\px}, 0, {\pz});
		\coordinate (L11) at (-1,0, {(\epsilon +2)/3});
		\coordinate (L12) at ({(\epsilon + 2)/3}, 0, -1);
		\coordinate (L21) at ($(p) + (0, {-\rho +1}, 0)$);
		\coordinate (L22) at ($(p) + (0,1,0)$);
		\coordinate (L31) at ({\px}, 0, -1);
		\coordinate (L32) at ({\px}, 0, {-1 + \rho});
		
		\draw[color = blue] (L11) -- (L12);
		\draw[color = blue] (L21) -- (L22);
		\draw[color = blue] (L31) -- (L32);
		
		\draw[thick, dashed] (L12) -- (L31) -- (L22);
		\draw[thick, dashed] (L31) -- (L21);
		\draw[thick, fill opacity = 0.6, fill=yellow]
		(L12) -- (L22) -- (L32) -- cycle;
		\draw[thick, fill opacity = 0.6, fill=blue!50!green]
		(L12) -- (L21) -- (L32) -- cycle;
		\draw[thick, fill opacity = 0.5, fill=violet]
		(L11) -- (L21) -- (L32) -- cycle;
		\draw[fill opacity = 0.5, fill = red!50!orange]
		(L11) -- (L22) -- (L32) -- cycle;
		
		\draw (v283) -- (v234) -- (v245) -- (v256);
		\draw[dotted] (v283) -- (v278) -- (v267) -- (v256);
		\draw (v134) -- (v145) -- (v156) -- (v167) -- (v178) -- (v183) -- cycle;
		\draw (v134) -- (v234);
		\draw (v145) -- (v245);
		\draw (v156) -- (v256);
		\draw[dotted] (v167) -- (v267);
		\draw[dotted] (v178) -- (v278);
		\draw (v183) -- (v283);
		\draw[fill=black] (p) circle (2pt);
		
		\end{tikzpicture}
		\caption{$0 < \kappa_1 < 1$}
	\end{subfigure}
	
	\begin{subfigure}[t]{0.6\textwidth}
		\centering
		\begin{tikzpicture}[scale = 1]
		\begin{scope}[color=gray!50, thin]
		\draw (-1,1.7) -- (-1,-1) -- (1.7,-1);
		\end{scope}
		\foreach \yi in {-1,0,1} {\draw (\yi,-1) -- (\yi, -1.1)
			node [anchor = north] {\tiny{\yi}};}
		\draw (1.6, -1.1) node[anchor=north] {\small{$y$}};
		\foreach \zi in {-1,0,1} {\draw(-1,\zi) -- (-1.1,\zi) node[anchor=east] {\tiny{\zi}};}
		\draw (-1.1,1.6) node[anchor=east] {\small{$z$}};
		
		\def\delta{0.2}

		\coordinate (v34) at (1,1);
		\coordinate (v45) at (0,1);
		\coordinate (v56) at (-1,0);
		\coordinate (v67) at (-1,-1);
		\coordinate (v78) at (0,-1);
		\coordinate (v83) at (1,0);
		
		\draw (v34) -- (v45) -- (v56) -- (v67) -- (v78) -- (v83) -- cycle;
		\draw[color = blue] (0,-1) -- node[color = black, above right]{\small$\pi(L_2)$} (0,1);
		\draw[color = blue] (-1, -\delta) node[color = black, below right]{\small$\pi(L_3)$}-- (1 - \delta, -\delta);
		
		\draw[fill=blue] (0, -\delta) circle (2pt);
		\end{tikzpicture}
		\caption{The projection of $L_2, L_3$ to the $yz$-plane}
	\end{subfigure}
	\caption{Moment polytopes in Example~\ref{ex:Fano_bundle}. Distorted cross-polytopes are centered at block dots. The blue dot is the image of the center~$p$ under the projection to the $yz$-plane.}
	\label{fig:Fano_bundle}
\end{figure}
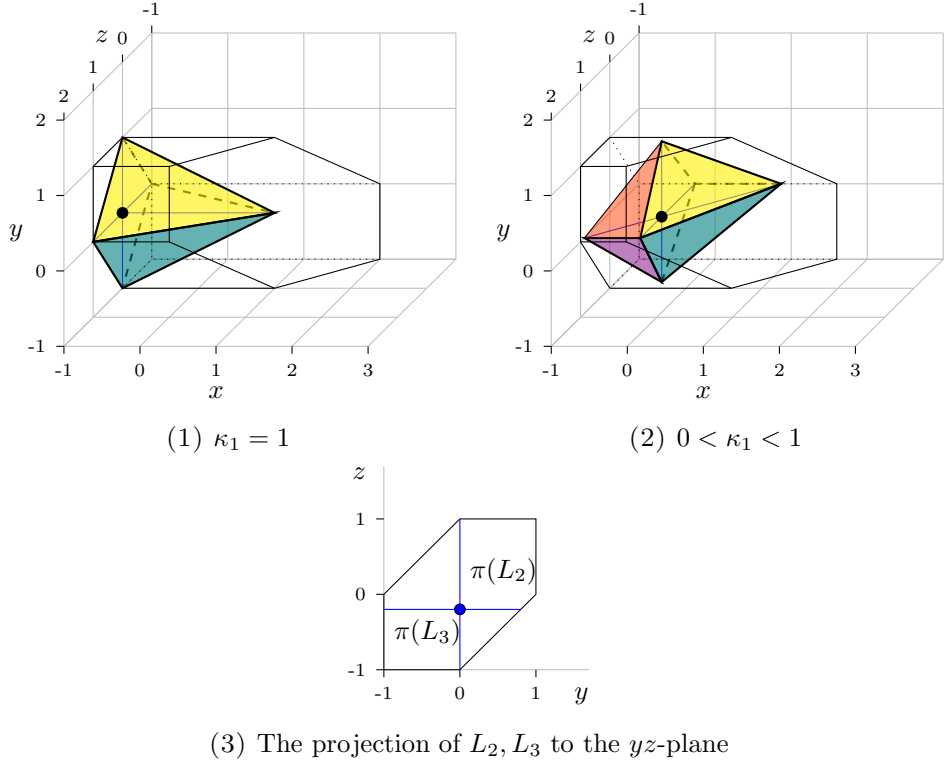

We list some questions which we could not solve.
\begin{Question}
	What is the Gromov width of $(M, \omega_{\epsilon})$ in~Example~\ref{ex:Fano_bundle}?
\end{Question}

\begin{Question}
	Consider monotone symplectic toric manifolds. Can the Gromov width be determined by combining Proposition~\ref{prop:embedding} and Theorem~\ref{thm:Lu}?
\end{Question}	

\begin{Question}
	Let $M$ be a closed symplectic toric manifold which is not necessarily Fano. If $\sum_{i=1}^N a_i \eta_i = 0$ for some non-negative integers~$a_i$, is it always true that	 $w_G(M) \leq \sum_{i=1}^N a_i \kappa_i?$
\end{Question}


\end{document}